\documentclass[12pt]{article}
\usepackage{fancyhdr}
\usepackage{pgfplots}
\pgfplotsset{compat=1.18}
\setlength{\headheight}{15pt}

\usepackage[margin=1in]{geometry} 
\usepackage{amsmath,amsthm,amssymb,scrextend}
\usepackage{fancyhdr}
\usepackage{tikz-cd}
\usetikzlibrary{positioning}
\usepackage{authblk}
\pagestyle{fancy}

\providecommand{\keywords}[1]{\paragraph{Keywords:} #1}

\usepackage{tikz}
\usepackage{pgfplots}
\usepackage{amsmath}
\usepackage[mathscr]{euscript}
 \let\mathscr\relax
\usepackage[scr]{rsfso}
\usepackage{amsthm}
\usepackage{amssymb}
\usepackage{multicol}
\usepackage{array}
\usepackage[colorlinks=true, pdfstartview=FitV, linkcolor=blue,
citecolor=blue, urlcolor=blue]{hyperref}

\newtheorem{theorem}{Theorem}
\newtheorem{pp}{Proposition}[theorem]
\newtheorem{lemma}{Lemma}[theorem]
\newtheorem{definition}{Definition}

\begin{document}

\lhead{Math}
\chead{Xinyu Gao}
\rhead{\today}
 
\title{Lebesgue bounds for multilinear spherical and lacunary maximal averages}
\author{Xinyu Gao\thanks{Email: xggh8@missouri.edu. ORCID: \href{https://orcid.org/0009-0009-6103-6174}{0009-0009-6103-6174}}}
\affil{Department of Mathematics, University of Missouri, Columbia, MO, USA}

\maketitle

\begin{abstract}
We establish $L^{p_1}(\mathbb R^d) \times \cdots \times L^{p_n}(\mathbb R^d) \to L^{r}(\mathbb R^d)$ bounds for the $n$-linear spherical averaging operators $\mathcal A^n$ in all dimensions $d \ge 2$, for indices $1 \le p_1,\dots,p_n \le \infty$ satisfying $\tfrac{1}{p_1}+\cdots+\tfrac{1}{p_n}=\tfrac{1}{r}$.  
Our argument begins by showing that $\mathcal A^n$ maps $L^1 \times \cdots \times L^1$ to $L^1$.  
For $n=2$, our result recovers and extends the bilinear theory previously developed by  Iosevich, Palsson, and Sovine.
We also obtain analogous estimates for the lacunary maximal spherical averages in the largest possible open region of indices.
\end{abstract}
\keywords{Multilinear spherical averages; lacunary maximal operators; multilinear harmonic analysis; slicing formula; dyadic decomposition.}

\section{Introduction}

\hspace{.2in} We will be working on $\mathbb R^d$ for $d\ge 2$. The study of spherical means is motivated by the wave equation in which they appear naturally. 
Maximal spherical averages were studied by
 Stein~\cite{S1976} who first proved their boundedness in dimensions $d\ge 3$.  
 Denoting by  $\sigma_{d-1}$   the spherical measure of the $d$-dimensional unit sphere $\mathbb{S}^{d-1}$, we define the spherical maximal operators of a nonnegative Schwartz function as follows:
$$
\mathcal M (f)(x) := \sup_{t>0} \int_{\mathbb{S}^{d-1}} f(x - ty)d\sigma_{d-1}(y).
$$
 Via a counterexample, 
 Stein~\cite{S1976} showed that $\mathcal M$   is unbounded on $L^p(\mathbb{R}^d)$ for $p \leq \frac{d}{d-1}$ when $d\ge 2$ and also obtained its $L^p$ boundedness for $p\geq \frac{d}{d-1}$ when $d\ge 3$. The more difficult case $d = 2$ was established about a decade later by Bourgain \cite{B1985}. See also \cite{CM1985}, \cite{R1986}.

The bilinear analogue of the spherical maximal operator is defined as follows:
$$
\mathcal M ^2(f, g)(x) = \sup_{t>0}\int_{\mathbb{S}^{2d-1}}f(x - ty)g(x - tz) d\sigma_{2d-1}(y, z)   
$$
for non-negative Schwartz functions $f,g$.
This operator was first introduced by Geba, Greenleaf, Iosevich, Palsson, and Sawyer \cite{GGIPS2013} who obtained $L^2(\mathbb R^d)\times \cdots \times L^2(\mathbb R^d)\to L^2(\mathbb R^d)$  bounds for it. The operator $\mathcal M ^n$ was later studied by Barrionevo,  Grafakos, He, Honz\'\i k, Oliveira~\cite{BGHHO2018} and Grafakos,  He, Honz\'\i k~ \cite{GHH2018} and Shrivastava, Shuin~\cite{SS2020} and Jeong, Lee~\cite{JL2019}. The latter authors 
established a complete characterization of the $L^p \times L^q \rightarrow L^r$ boundedness 
for it using the slicing identity
$$
\int_{\mathbb S^{2d-1}}F(x,y)d\sigma(x,y) = \int_{\mathbb  \mathbb B^d(0,1)}\int_{\mathbb S^{d-1}}F(x,\sqrt{1 - \lvert x\rvert^2}(1 - \lvert x\rvert^2)^\frac{d-2}{2} d\sigma_{d-1}(y)dx. 
$$
Employing this identity the authors in \cite{JL2019} proved that 
 $\mathcal{M} ^2(f,g)(x)$ is bounded pointwise by a product of the linear spherical maximal operator of $f$ and the Hardy-Littlewood maximal operator of $g$. These authors also obtain Lorentz space estimates for endpoint cases. 

 Some other authors have also studied the spherical maximal operators; see for instance \cite{AP20191}, \cite{AP20192} \cite{C1985}, \cite{CM1979}, \cite{CGHHS2022}, \cite{MSS1992}, and \cite{S1998}. Several authors have also established some other spherical maximal operators with different settings; for instance, see \cite{C1979}, \cite{G1981}, \cite{MSW2002} and \cite{DV1996}.
 
More general maximal operators are obtained when the supremum is taken over dyadic dilates of a subset $E \subset [1,2]$. In the case of $\mathbb S=\mathbb S^{d-1}$ and $E=\{1\}$, one ends up with the so-called lacunary spherical maximal operators.
$$
\mathcal M_{\mathrm{lac}}(f)(x)=\sup_{l\in \mathbb Z}\Bigl\lvert \int_{\mathbb S^{d-1}}f(x-2^{-l}z)d\sigma_{d-1}(z)\Bigr\rvert.
$$
The operator $\mathcal M_{\mathrm{lac}}$ has better boundedness properties than the spherical maximal operators $\mathcal M$; Calder\'on~\cite{C1979} and independently Coifman and Weiss \cite{CW1978} showed that $\mathcal M_{\mathrm{lac}}$ is bounded in $L^p(\mathbb R^d)$ for any $1 < p \leq \infty$ and $d\ge 2$. 

The bilinear analogue of $\mathcal M_{\mathrm{lac}}$  is   
$$
\mathcal{M}_{\mathrm{lac}}^2(f,g)(x)=\sup_{l\in \mathbb Z}\lvert\mathcal{A}_
{2^{-l}}^2(f,g)(x)\rvert, 
$$
where 
\[
\mathcal{A}_t^2(f,g)(x)=\int_{\mathbb S^{2d-1}}f
(x-ty)g(x-tz)d\sigma_{2d-1}(y,z) ,  \qquad x\in \mathbb R^d,  
\]
is the bilinear spherical average of the nonnegative Schwartz functions $f,g$ at scale   $t\ge 0$.

 In \cite{BB2024}, Borges and Foster proved $L^p \times L^q \rightarrow L^r$ bounds for certain lacunary bilinear maximal averaging operator with parameters satisfying the H\"older relation $\frac{1}{p} + \frac{1}{q} = \frac{1}{r}$. Concerning the lacunary bilinear spherical maximal operator $\mathcal{M}_{\mathrm{lac}}^2$ in dimensions $d \geq 2$, they proved $L^p\times L^q \to L^r$ boundedness for any $1<p,q \leq \infty$. To obtain this result, the authors in \cite{BB2024} used the fact, contained in \cite{IPS2021}, that  the non-maximal spherical average $\mathcal{A}_1^2(f,g)(x)$ maps $L^p\times L^q\rightarrow L^r$ for any $1\le p,q\le \infty$ and $\frac{1}{p} + \frac{1}{q} = \frac{1}{r}$. It turns out that these results follow from the facts that $\mathcal{A}_1^2(f,g)(x)$ maps $L^1\times L^1\rightarrow L^1$. 

This result inspires us to study the $n$-linear spherical average operator:
\begin{equation}\label{AOP}
\mathcal A^n (f_1,f_2,\dots ,f_n)(x)=\int_{\mathbb S^{nd-1}}f_1(x-y_1)\cdots f_n(x-y_n)d\sigma_{nd-1} (y_1,y_2,\dots ,y_n),
\end{equation}
for $d\geq2$, where $f_i:\mathbb R^d\rightarrow \mathbb R$, $i\in \{1,\dots,n\}$, are non-negative Schwartz functions, $x,y_1,y_2,\dots ,y_n\in \mathbb R^d$, and
$\sigma (y_1,y_2,\dots ,y_n)$ is the spherical measure on $\mathbb S^{nd-1}$.

Some other authors have also studied the bilinear spherical non-maximal operators; see for instance \cite{GI2012}. A multilinear but non-maximal version of this operator when all input functions lie in the same space $L^p(\mathbb R)$ was previously studied by Oberlin \cite{O1988}. The present work partially answers the question: for which values of $p$ and $q$ is there an $L^{p}\times \cdots \times L^{p}$ to $L^q$-norm inequality 
$$\lVert\mathcal A^n(f_1,\ldots,f_n)\rVert_{L^q} \leq C\|f_1\|_{L^p}\cdots\|f_n\|_{L^p}.$$

In this work, we study the boundedness properties of $\mathcal{A}^n$ in the largest possible range of Lebesgue indices. We also study  the boundedness   of the associated 
lacunary maximal spherical operator from a product of Lebesgue spaces to another Lebesgue space in the largest possible open set of indices. 

Throughout this paper, we use the notation $A \lesssim B$ for positive $A$ and $B$, which
means that $A \leq CB$ for some $C > 0$ independent of $A$ and $B$.  We recall that $L^p$ norm $\|\cdot\|_{L^p}$ of a function $f(x)$ on $\mathbb R^d$ means that $\|f\|_{L^p}=\big(\int_{\mathbb R^d}|f(x)|^pdx\big)^{\frac{1}{p}}$ 
and also recall that Schwartz functions are smooth functions $\phi$ on $\mathbb R^d$ which satisfy
$\sup_{x\in \mathbb R^d} |\phi(x) | (1+|x|)^M<\infty$ for every $M>0$. 
The characteristic function of a set $A$ is denoted by $ \chi_A$. We denote by $\mathcal S$ the space of all Schwartz functions on $\mathbb R^d$, by $\mathcal C^\infty(X)$ the space of smooth functions and by 
$\mathcal C_0^\infty(X)$ is the space of all smooth functions supported in a compact subset of an open subset $X$ of $\mathbb R^d$. In this paper we will always assume that $d\geq2$. 

In this paper, we will be working with non-negative   smooth and compactly supported functions on  $\mathbb R^d$. Such functions are dense in all Lebesgue spaces, and once our $L^p$ estimates are obtained for this class, by density, they can be extended to the entire spaces.  

\section{Bounds for the spherical average operator \texorpdfstring{$\mathcal{A}^n$}{Aⁿ}}

In this section we assume $d\ge 2$ and prove the following result for the $n$-linear spherical averaging operator.

\begin{theorem}\label{th1}
Assume $d\ge 2$ and $1\le p_1,\dots,p_n\le\infty$ with $\tfrac1r=\tfrac1{p_1}+\cdots+\tfrac1{p_n}$. Then the $n$-linear spherical average operator $\mathcal A^n$ (defined in \eqref{AOP}) maps $L^{p_1}(\mathbb R^d)\times\cdots\times L^{p_n}(\mathbb R^d)\to L^r(\mathbb R^d)$.
\end{theorem}

First, we will show that it is sufficient to prove this theorem for $p_1=\cdots = p_n=1$ and $r=1$ by the following criterion:
\begin{lemma}\cite{IPS2021}
For $l = (l_1,\dots ,l_d) \in\mathbb Z^d$  we denote by $Q_l$ the dyadic cube with side length $1$ and lower left corner at $l \in \mathbb Z^d$. 
Suppose that the $n$-linear operator $U(f_1, \dots, f_n)$ satisfies the following localization properties:

\textup{(C1)} There exists $N<\infty$ such that $U(f_1,\dots,f_n)\equiv 0$ whenever there are $i\neq j$ with $\operatorname{supp}(f_i)\subset Q_l$, $\operatorname{supp}(f_j)\subset Q_m$, and $\|l-m\|_\infty:=\max_{1\le k\le d}|l_k-m_k|>N$.

\textup{(C2)} There exists $R>0$ such that $\operatorname{supp}\,U(f_1,\dots,f_n)\subset \big(\bigcup_{i=1}^n \operatorname{supp}(f_i)\big)+B(0,R)$.

\textup{(C3)} Moreover, suppose there exist indices $p_1,\dots,p_n\ge 1$ with $\frac1r=\frac1{p_1}+\cdots+\frac1{p_n}>1$ and a constant $A$ such that for all nonnegative $f_i$ supported in $Q_i$,
\[
\|U(f_1,\dots,f_n)\|_{L^1(\mathbb R^d)}\le A\,\|f_1\|_{L^{p_1}}\cdots \|f_n\|_{L^{p_n}}.
\]

Then, for each $s\in [r,1]$ we have 
$$
U : L^{p_1} (\mathbb R^d) \times \cdots \times L^{p_n} (\mathbb R^d) \rightarrow L^s (\mathbb R^d).
$$ 
\end{lemma}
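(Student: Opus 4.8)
The plan is to reduce the global bound to the local hypothesis (C3) by decomposing each input over the unit dyadic cubes and exploiting the two support conditions (C1) and (C2). First I would write $f_i = \sum_{l \in \mathbb Z^d} f_i^l$ with $f_i^l := f_i \chi_{Q_l}$, and expand by $n$-linearity,
\[
U(f_1,\dots,f_n) = \sum_{l_1,\dots,l_n \in \mathbb Z^d} U(f_1^{l_1},\dots,f_n^{l_n}).
\]
By (C1) every surviving term has $\|l_i - l_j\|_\infty \le N$ for all $i,j$; grouping the terms according to the first index $l:=l_1$, the remaining indices $l_2,\dots,l_n$ then each range over the at most $(2N+1)^d$ cubes within $\ell^\infty$-distance $N$ of $l$. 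Writing $G_l$ for this grouped sum, we have $U(f_1,\dots,f_n) = \sum_{l} G_l$, and by (C2) each $G_l$ is supported in $Q_l + B(0,R')$ for a fixed radius $R' = R'(N,R,d)$; in particular the sets $\operatorname{supp} G_l$ have uniformly bounded measure.

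For the estimate itself I would treat all $s \in [r,1]$ at once. Set $a_i(l) := \|f_i^l\|_{L^{p_i}}$, so that $\|a_i\|_{\ell^{p_i}} = \|f_i\|_{L^{p_i}}$, and let $b_i(l) := \sum_{\|m-l\|_\infty \le N} a_i(m)$ be the associated local sums; since $b_i$ is a sum of boundedly many translates of $a_i$, one has $\|b_i\|_{\ell^{q}} \lesssim \|a_i\|_{\ell^q}$ for every $q \ge 1$. Using the $s$-subadditivity $\bigl|\sum_l G_l\bigr|^s \le \sum_l |G_l|^s$ (valid for $0<s\le1$) together with Hölder's inequality on the bounded-measure set $\operatorname{supp} G_l$, which gives $\|G_l\|_{L^s} \lesssim \|G_l\|_{L^1}$, I reduce matters to
\[
\|U(f_1,\dots,f_n)\|_{L^s}^s \;\lesssim\; \sum_{l} \|G_l\|_{L^1}^{s}.
\]
Applying the local estimate (C3) to each term of $G_l$ and relaxing the pairwise constraints on $l_2,\dots,l_n$ to membership in the neighborhood of $l$ (legitimate since all terms are nonnegative) yields $\|G_l\|_{L^1} \lesssim a_1(l)\prod_{i\ge 2} b_i(l)$, so the right-hand side is controlled by $\sum_l a_1(l)^s \prod_{i\ge2} b_i(l)^s$.

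It remains to sum this in $l$, which is the crux of the argument. I would apply the generalized Hölder inequality for sequences with exponents $u_1,\dots,u_n$ satisfying $\sum_i u_i^{-1} = 1$ and $u_i \ge p_i/s$; such a choice exists precisely because the maximal achievable value $\sum_i (p_i/s)^{-1} = s/r$ is at least $1$, i.e. because $s \ge r$. Since then $s u_i \ge p_i$, the nesting of $\ell^q$-spaces gives $\|a_1^s\|_{\ell^{u_1}} = \|a_1\|_{\ell^{s u_1}}^s \le \|a_1\|_{\ell^{p_1}}^s$ and likewise $\|b_i^s\|_{\ell^{u_i}} \lesssim \|a_i\|_{\ell^{p_i}}^s$, whence
\[
\sum_l a_1(l)^s \prod_{i\ge2} b_i(l)^s \;\lesssim\; \prod_{i=1}^n \|a_i\|_{\ell^{p_i}}^s \;=\; \prod_{i=1}^n \|f_i\|_{L^{p_i}}^s ,
\]
and taking $s$-th roots completes the proof. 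The main obstacle is exactly this final summation, where two competing constraints must be reconciled: the passage to the quasi-norm forces $s \le 1$ (used in the subadditivity and in the finite-measure Hölder step), while the exponent feasibility $u_i \ge p_i/s$ with $\sum_i u_i^{-1} = 1$ forces $s \ge r$. It is precisely the interplay of (C1)--(C3) that pins the admissible range down to $[r,1]$. The endpoint $s=1$ is recovered by reading $\|\cdot\|_{L^1}$ as an honest norm, so the same computation applies verbatim there.
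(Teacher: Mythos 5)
Your proof is correct, and it is essentially the standard Grafakos--Kalton-style localization argument that \cite{IPS2021} uses for this criterion; the paper itself states the lemma as a citation and contains no proof of its own, so there is nothing internal to compare against beyond that reference. The cube decomposition, the use of (C1)/(C2) to get finitely overlapping, uniformly bounded supports, the reduction $\|G_l\|_{L^s}\lesssim\|G_l\|_{L^1}$ via H\"older on sets of bounded measure, and the final $\ell^{u_i}$-H\"older summation (feasible exactly when $s\ge r$, with $s\le 1$ needed for subadditivity) all match the intended argument; the only implicit step worth a remark is that the infinite multilinear expansion $U(f_1,\dots,f_n)=\sum_{l_1,\dots,l_n}U(f_1^{l_1},\dots,f_n^{l_n})$ should first be carried out for compactly supported nonnegative functions (finite sums) and then extended by density, as the paper's standing conventions already arrange.
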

This criterion is given due to Iosevich, Palsson, and Sovine~\cite{IPS2021} by adapting the arguments of Grafakos and Kalton~\cite{GK2001} to a   more general situation. 
It is easy to see that the spherical averaging operator $\mathcal A^n$ satisfies conditions (C1) and (C2). Thus, it will suffice to prove (C3), i.e., the theorem in the case $p_1=\cdots = p_n=1=r$.

To prove this reduced version, we introduce the multiplication of distributions by smooth functions, the pushforward measure, and provide relevant  definitions and lemmas.

\begin{definition}
Recall that $\mathcal C_0^\infty(X)$ denotes the space of smooth, compactly supported functions on an open set $X\subset\mathbb R^m$. A \emph{distribution} on $X$ is a continuous linear functional $u:\mathcal C_0^\infty(X)\to\mathbb C$; equivalently, for every compact $K\subset X$ there exist constants $C,k$ such that
\[
|u(\phi)|\le C\sum_{|\alpha|\le k}\sup_{x\in K}|\partial^\alpha\phi(x)|,\qquad \phi\in \mathcal C_0^\infty(K).
\]
We write $\mathcal D'(X)$ for the space of distributions on $X$.
\end{definition}

\begin{definition} Let $\delta$ be a linear functional on the space $\mathcal C_0^\infty (X)$, defined by: $$ \delta(\phi)=\phi(0),\quad \phi\in \mathcal C_0^\infty (X). $$ The distribution $\delta$ is  called the Dirac mass at $0$. \end{definition}

\begin{lemma}[Slicing Formula]\label{lem:slicing}
Let $X\subset\mathbb R^d$ be open and $\Phi\in \mathcal C^\infty(X)$ be real-valued. Assume that $|\nabla\Phi(x)|\neq0$ for all $x\in\Omega:=\{x\in X:\Phi(x)=0\}$. Then there is a unique continuous linear map
\[
\Phi^*:\mathcal D'(\mathbb R)\to \mathcal D'(X)
\]
 such that $\Phi^*u= u\circ \Phi$ for every $u\in \mathcal D'(\mathbb R)$ on the test functions. Then $\Phi^*u$ is called the pullback of $u$ by $\Phi$.
 In particular, if $|\nabla\Phi|\neq0$ on the surface $\Omega:=\{x|\Phi(x)=0\}$, then for the Dirac mass $\delta\in\mathcal D'(\mathbb R)$ at $0$, the pullback $\Phi^*\delta$ exists and equals the hypersurface measure with the standard Jacobian factor:
\begin{equation}\label{eq:slicing-main}
\Phi^*\delta \;=\; \frac{d\nu_\Omega}{|\nabla\Phi|},
\end{equation}
i.e.\ for every $F\in \mathcal C_0^\infty(X)$,
\begin{equation}\label{eq:slicing-pairing}
\langle \Phi^*\delta,\,F\rangle
=\int_\Omega \frac{F(x)}{|\nabla\Phi(x)|}\,d\nu_\Omega(x),
\end{equation}
where $d\nu_\Omega$ is the induced $(d-1)$–dimensional surface measure on $\Omega$.
\end{lemma}

\begin{proof}
We quickly verify the existence of the pullback in this proof. Fix $x_0\in X$. Since $\nabla\Phi(x_0)\neq0$, there exists a smooth $\Psi:X\to\mathbb R^{d-1}$ such that the map
\[
\Upsilon:=(\Phi,\Psi):U\to V\subset\mathbb R\times\mathbb R^{d-1}
\]
is a $\mathcal C^\infty$ diffeomorphism from a neighborhood $U$ of $x_0$ onto $V$.

 Denote by $(t,y)$ the coordinates on $V$ so $t=\Phi(x)$, $y=\Psi(x)$. For $F\in \mathcal C_0^\infty(U)$ and $u\in\mathcal D'(\mathbb R)$ we have locally:

\[
\int (\Phi^*u)\,F\,dx
= \int u(\Phi(x))\,F(x)\,dx.
\]

Hence we \emph{define} the local pullback $\Phi^*u$ of $u\in \mathcal D'(\mathbb R)$ by setting
\[
\langle \Phi^*u,\,F\rangle_U
:= \big\langle u(t)\otimes 1(y),\ (F\circ \Upsilon^{-1})(t,y)\,|\det D\Upsilon^{-1}(t,y)|\big\rangle,
\]
where $D\Upsilon^{-1}$ is the Jacobian matrix of $\Upsilon^{-1}(t,y)$.

We now choose $u_j\in \mathcal C_0^\infty(\mathbb R)$ approximating $u$ in $\mathcal D'(\mathbb R)$. In view of the remark after \cite[Theorem 2.2.4]{BOOK}, it follows that $\Phi^*u_j$ converges in $\mathcal D'(X)$ to a distribution
$\Phi^*u$, i.e., we have $\langle \Phi^*u_j,\,F\rangle_U\to \langle \Phi^*u,\,F\rangle_U$. Independence of the choice of $\Psi$ follows from uniqueness of local coordinates given by the inverse function theorem and compatibility on overlaps; therefore these local definitions patch with a partition of unity to yield a global continuous linear map $\Phi^*:\mathcal D'(\mathbb R)\to \mathcal D'(X)$. Uniqueness of such a map is given by the density of $\mathcal C_0^\infty (X)$ in $\mathcal D'(X)$ \cite[Theorem 4.1.5]{BOOK}.

Now assume that $|\nabla\Phi|\neq0$ on the surface $\Omega:=\{x:\Phi(x)=0\}$ and $x_0\in\Omega$ and take $u=\delta$, then
\[
\langle \Phi^*\delta,\,F\rangle_U= \int_{\mathbb R^{d-1}} (F\circ \Upsilon^{-1})(0,y)\,|\det D\Upsilon^{-1}(0,y)|\,dy.
\]

Recall the coarea formula for a submersion $\Phi$ states that for every integrable $F$,
\[
\int_U F(x)\,dx
= \int_{\mathbb{R}} \left( \int_{\Phi(x)=t}
\frac{F(x)}{|\nabla \Phi(x)|}\,d\nu_t(x) \right) dt,
\]
where $d\nu_t$ is the $(d-1)$--dimensional Euclidean surface measure on the level set
$\{x:\Phi(x)=t\}$.

Evaluating at $t=0$ yields the desired local identity on $\Omega$:
\[
\big|\det D\Upsilon^{-1}(0,y)\big|\,dy
= \frac{1}{|\nabla\Phi(x)|}\,d\nu_\Omega(x),
\qquad x = \Upsilon^{-1}(0,y).
\]

Hence
\[
\langle \Phi^*\delta,\,F\rangle
= \int_{\Omega\cap U} \frac{F(x)}{|\nabla\Phi(x)|}\,d\nu_\Omega(x),
\]
which yields \eqref{eq:slicing-pairing}, and equivalent to \eqref{eq:slicing-main}.

For additional details, refer to Theorem~6.1.2 and Theorem~6.1.5 of~\cite{BOOK}.
\end{proof}

\begin{lemma}[Change of Variables]\label{lem:covariance}
Under the hypotheses of Lemma~\ref{lem:slicing}, let $h:\mathbb R^d\to\mathbb R^d$ be a $\mathcal C^\infty$ diffeomorphism. Then, for all $F\in \mathcal C_0^\infty(X)$,
\begin{equation}\label{eq:slicing-covariance}
\langle \Phi^*\delta,\,F\rangle
=\big\langle (\Phi\circ h)^*\delta,\,(F\circ h)\,|\det Dh|\big\rangle .
\end{equation}
\end{lemma}

\begin{proof}
From Lemma~\ref{lem:slicing},
\[
\langle \Phi^*\delta,\,F\rangle
= \int_{\Omega} \frac{F(x)}{|\nabla\Phi(x)|}\,d\nu_\Omega(x).
\]
Use the change of variables $x=h(z)$ and note that $(\Phi\circ h)^{-1}(0)=h^{-1}(\Omega)$ and
$d\nu_\Omega(x)=|\det Dh(z)|\,d\nu_{(\Phi\circ h)^{-1}(0)}(z)$ while
$|\nabla(\Phi\circ h)(z)|=|\,(D\Phi)(h(z))\,Dh(z)\,|$. This gives
\[
\langle \Phi^*\delta,\,F\rangle
= \int_{(\Phi\circ h)^{-1}(0)} \frac{(F\circ h)(z)}{|\nabla(\Phi\circ h)(z)|}\,|\det Dh(z)|\,d\nu_{(\Phi\circ h)^{-1}(0)}(z),
\]
which is exactly the right-hand side of \eqref{eq:slicing-covariance} by the defining pairing for $(\Phi\circ h)^*\delta$.
\end{proof}

\noindent \textbf{Remark}.
It is common shorthand to write $\langle \Phi^*\delta,\,F\rangle=\int_{\mathbb R^d}F(x)\,\delta(\Phi(x))\,dx$.

\begin{definition}
For a measurable set $A\subset \mathbb R^{(n-1)d}$  we define the pushforward measure of 
$\sigma_{nd-1} $ by setting
$$
\sigma_{nd-1}^{(-)}(A)=\int_{\mathbb S^{nd-1}}\chi_A (y_1-y_n,\dots ,y_{n-1}-y_n)\,d\sigma_{nd-1} (y_1,\dots ,y_n).
$$
Then we have the following result concerning $\sigma_{nd-1}^{(-)}$.
\end{definition}

\begin{pp}
Then, for a nonnegative real-valued, compactly supported smooth function $F$ on $ \mathbb R^{(n-1)d} $ we have
\begin{align}
\begin{split}
\int_{\mathbb R^{(n-1)d}}F(z_1,&\dots , z_{n-1})\,d\sigma_{nd-1}^{(-)}(z_1,\dots ,z_{n-1}) \\
 = &\int_{\mathbb S^{nd-1}}F(y_1-y_n,\dots ,y_{n-1}-y_n)\,d\sigma_{nd-1}(y_1,\dots ,y_n).\label{p1}
 \end{split}
\end{align}
\end{pp}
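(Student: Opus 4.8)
The plan is to recognize the left-hand side as integration against a pushforward measure and then invoke the standard change-of-variables identity for pushforwards. Write $T:\mathbb S^{nd-1}\to\mathbb R^{(n-1)d}$ for the smooth map $T(y_1,\dots,y_n)=(y_1-y_n,\dots,y_{n-1}-y_n)$. By the definition of $\sigma_{nd-1}(-)$ stated just above the proposition, for every measurable $A\subset\mathbb R^{(n-1)d}$ we have $\sigma_{nd-1}(-)(A)=\sigma_{nd-1}(T^{-1}(A))$, so $\sigma_{nd-1}(-)$ is precisely the pushforward $T_*\sigma_{nd-1}$. The asserted identity \eqref{p1} is then exactly the statement $\int F\,d(T_*\sigma_{nd-1})=\int (F\circ T)\,d\sigma_{nd-1}$, and so the whole task reduces to proving this one identity.

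I would prove it by the usual approximation argument. First, for an indicator $F=\chi_A$ the claim is immediate: the left side is $\sigma_{nd-1}(-)(A)$ while the right side is $\int_{\mathbb S^{nd-1}}\chi_A(T(y))\,d\sigma_{nd-1}(y)$, and these coincide by the very definition of the pushforward. By linearity of both integrals the identity extends to nonnegative simple functions $F=\sum_k c_k\chi_{A_k}$. Finally, any nonnegative measurable $F$ is the pointwise increasing limit of simple functions $F_k\uparrow F$; since $T$ is continuous, $F_k\circ T\uparrow F\circ T$ as well, so two applications of the monotone convergence theorem, one on each side, pass the identity to the limit. In particular it holds for the nonnegative smooth compactly supported $F$ in the statement.

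The argument is entirely routine measure theory, so I do not expect a serious obstacle; the only points deserving a word of care are measurability and well-definedness. Measurability of $F\circ T$ is automatic, since $T$ is smooth (hence Borel) and $F$ is continuous, and $\sigma_{nd-1}(-)$ is a genuine finite Borel measure because $T$ is continuous and $\sigma_{nd-1}$ is a finite measure on the compact sphere, so preimages of Borel sets are Borel. I would also note that both sides are finite: $F$ is bounded with compact support and $\sigma_{nd-1}(\mathbb S^{nd-1})<\infty$, which legitimizes the interchange of limits above and makes the monotone convergence step clean.
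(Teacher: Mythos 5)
Your proposal is correct and follows essentially the same route as the paper: both verify the identity on indicator/simple functions directly from the definition of $\sigma_{nd-1}(-)$ (which is indeed the pushforward of $\sigma_{nd-1}$ under $(y_1,\dots,y_n)\mapsto(y_1-y_n,\dots,y_{n-1}-y_n)$) and then pass to the limit via the monotone convergence theorem. Your explicit attention to measurability and finiteness is a minor polish the paper omits, but it does not change the substance of the argument.
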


\begin{proof}[Proof]
We start with a simple function: 
$$ S=\sum_{i=1}^{k} \lambda_i\chi_{E_i},$$
where $\lambda_i>0$, $E_i \subset \mathbb R^{(n-1)d}$ are measurable and
satisfy  $|E_i|<\infty$. Then we have
\begin{align*}
&\hspace{-.5in}\int_{\mathbb S^{nd-1}}S(y_1-y_n,\dots ,y_{n-1}-y_n)\,d\sigma_{nd-1}(y_1,\dots ,y_n)\\
=&\sum_{i=1}^{k} \lambda_i\int_{\mathbb S^{nd-1}}\chi_{E_i}(y_1-y_n,\dots ,y_{n-1}-y_n)\,d\sigma_{nd-1}(y_1,\dots ,y_n) \\
 = &\sum_{i=1}^{k} \lambda_i\,\sigma_{nd-1}^{(-)}(E_i) \\
=&\sum_{i=1}^{k} \lambda_i\int_{\mathbb R^{(n-1)d}}\chi_{E_i}(z_1,\dots ,z_{n-1})\,d\sigma_{nd-1}^{(-)}(z_1,\dots ,z_{n-1}) \\
=&\int_{\mathbb R^{(n-1)d}}S(z_1,\dots ,z_{n-1})\,d\sigma_{nd-1}^{(-)}(z_1,\dots ,z_{n-1}) .
\end{align*}  
Thus \eqref{p1} holds for simple functions, hence by monotone convergence it holds for nonnegative measurable functions $F$.
\end{proof}

\begin{pp}\label{p2}
For any Lebesgue measurable set $E\subset\mathbb R^{(n-1)d}$,
\[
\sigma_{nd-1}^{(-)}(E)\lesssim |E|,
\]
where $|E|$ is Lebesgue measure on $\mathbb R^{(n-1)d}$. 

In particular, for the Radon-Nikodym derivative, we have
\[
\sigma_{nd-1}^{(-)}\ll |\cdot|\quad\text{with }\quad
\big\|\tfrac{d\sigma_{nd-1}^{(-)}}{dz_1\cdots dz_{n-1}}\big\|_\infty\lesssim n^{(d-2)/2}.
\]
\end{pp}

\begin{proof}
Let $B\subset\mathbb R^{nd}$ be the ball of radius $10$ centered at the origin and let $E$ be an 
arbitrary fixed measurable set of finite measure. Define 
\[
\Phi_0(y_1,\dots,y_n)=-1+\sum_{i=1}^n |y_i|^2 ,\]
\[
F_0(y_1,\dots,y_n)=\chi_E(y_1-y_n,\dots,y_{n-1}-y_n)\,\chi_B(y_1,\dots,y_n),
\]
and
\begin{equation}\label{eq:change-h}
h_0(z_1,\dots,z_n)=(z_1+z_n,\dots,z_{n-1}+z_n,z_n)=:(y_1,\dots,y_n).
\end{equation}
Then the matrix of derivatives $D h_0(z_1,\dots,z_n)$ has determinant $1$, and 
$$
|y_1|^2+\cdots+|y_n|^2=1 \iff  |z_1+z_n|^2+\cdots+|z_n|^2=1.
$$

Clearly $F_0\in L^\infty(\mathbb R^{nd})$ is compactly supported and satisfies $0\le F_0\le 1$. As $F_0$ is not smooth, we approximate it by compactly supported smooth functions in order to apply the slicing formula.
Choose $K\in \mathcal C_0^\infty(\mathbb R^{nd})$, $K\ge 0$, $\int_{\mathbb R^d} K=1$, and set $K_\varepsilon(u)=\varepsilon^{-nd}K(u/\varepsilon)$.
Then $F_0*K_\varepsilon\to F_0$ a.e. (\cite[Theorem 2.5.7]{FFA}), $F_0*K_\varepsilon\in \mathcal C_0^\infty$, and 
$$
\|F_0*K_\varepsilon\|_\infty\le \|F_0\|_\infty\le 1.
$$

By definition of the pushforward measure, we have 
\begin{align*}
\sigma_{nd-1}^{(-)}(E)
&=\int_{\mathbb S^{nd-1}}\chi_E(y_1-y_n,\dots,y_{n-1}-y_n)\,d\sigma_{nd-1}(y)\\
&=\int_{\mathbb S^{nd-1}} F_0(y)\,d\sigma_{nd-1}(y)\\
&= \int_{\mathbb S^{nd-1}} \lim_{\varepsilon\to 0} (F_0*K_\varepsilon)(y)\,d\sigma_{nd-1}(y).
\end{align*}

Now since $F_0*K_\varepsilon\in \mathcal C_0^\infty$, we apply \eqref{eq:slicing-pairing} to $F=F_0*K_\varepsilon$ with $\Phi=\Phi_0$ and $\Omega=\mathbb S^{nd-1}$ and we obtain 
$$\int_{\mathbb S^{nd-1}} F_0*K_\varepsilon\,d\sigma_{nd-1}=2\,\langle \Phi_0^*\delta, F_0*K_\varepsilon\rangle.$$

An application of dominated convergence (using $\|F_0*K_\varepsilon\|_\infty\le 1$) yields
\[
\sigma_{nd-1}^{(-)}(E)=\lim_{\varepsilon\to 0} \int_{\mathbb S^{nd-1}} (F_0*K_\varepsilon)(y)\,d\sigma_{nd-1}(y)
=\lim_{\varepsilon\to 0}\,2\,\langle \Phi_0^*\delta, F_0*K_\varepsilon\rangle.
\]
Applying \eqref{eq:slicing-covariance} under the 
diffeomorphism $h_0$ we deduce 
\begin{equation}\label{eq:covariance-phi-h}
\sigma_{nd-1}^{(-)}(E)
=\lim_{\varepsilon\to 0}\,2\,\langle (\Phi_0\circ h_0)^*\delta, (F_0*K_\varepsilon)\circ h_0\rangle.
\end{equation}

We now pass to the coordinates in \eqref{eq:change-h}. Write $z_i=a_i$ if $d=2$; if $d>2$, set $z_i=(a_i,b_i)$ with
$a_i\in\mathbb R^2$ and $b_i\in\mathbb R^{d-2}$. Define the quantities
\[
A=\sum_{i=1}^{n-1}|a_i+a_n|^2+|a_n|^2,\quad
A'=\sum_{i=1}^{n-1}|a_i|^2-\frac{\big|\sum_{i=1}^{n-1}a_i\big|^2}{n}. 
\]
Then we have 
\[
A=\frac{1}{n}\Big|n a_n+\sum_{i=1}^{n-1}a_i\Big|^2+A'.
\]

\paragraph{Case $d=2$.}
For fixed $(a_1,\dots,a_{n-1})$, for $a_n\in \mathbb R^2$ we define
\[
\Phi_0^{(a_1,\dots,a_{n-1})}(a_n)
=\frac{1}{n}\Big|n a_n+\sum_{i=1}^{n-1}a_i\Big|^2+A'-1.
\]
Then
\[
|\nabla_{a_n}\Phi_0^{(a_1,\dots,a_{n-1})}(a_n)|=2\Big|n a_n+\sum_{i=1}^{n-1}a_i\Big|,
\]
where $\nabla_{a_n}$ is the gradient on $\mathbb R^2$ with respect to $a_n$.

Applying the slicing formula \eqref{eq:slicing-pairing} to \eqref{eq:covariance-phi-h} with $(a_1,\dots,a_{n-1})$ fixed, and define the level set  
$$\Omega_{a_n}:=\{a_n\in \mathbb R^2:\,\, \Phi_0^{(a_1,\dots,a_{n-1})}(a_n)=0\}$$ 
is a circle in $\mathbb R^2$. We obtain
\begin{align*}
\sigma_{nd-1}^{(-)}(E)
&=2\lim_{\epsilon\to 0}\,\langle (\Phi_0\circ h_0)^*\delta,(F_0*K_\epsilon)\circ h_0\rangle\\
&=2\lim_{\epsilon\to 0}\,\langle (\Phi_0^{(a_1,\dots,a_{n-1})}(a_n))^*\delta,(F_0*K_\epsilon)\circ h_0\rangle\\
&=2\lim_{\epsilon\to 0}\int_{\mathbb R^{2(n-1)}}\int_{\mathbb R^2}
(F_0*K_\epsilon)\circ h_0(a_1,\dots ,a_{n-1},a_n)\,
\delta(\Phi_0^{(a_1,\dots,a_{n-1})}(a_n))\,da_n\,da_1\cdots da_{n-1}\\
&=\lim_{\epsilon\to 0}\int_{\mathbb R^{2(n-1)}}
2\big\langle \delta(\Phi_0^{(a_1,\dots,a_{n-1})}(a_n)),(F_0*K_\epsilon)\circ h_0\big\rangle\,da_1\cdots da_{n-1}\\
&=\lim_{\epsilon\to 0}\int_{\mathbb R^{2(n-1)}}\int_{\Omega_{a_n}}
\frac{(F_0*K_\epsilon)\circ h_0(a_1,\dots,a_{n-1},a_n)}{\big|na_n+a_1+\cdots+a_{n-1}\big|}\,
d\sigma_1 (a_n)\,da_1\cdots da_{n-1}.
\end{align*}
Using on $\Omega_{a_n}$ the identity $\big|na_n+\sum_{i=1}^{n-1}a_i\big|=\sqrt{n}\sqrt{\,1-A'\,}$, the dominated convergence theorem yields
\begin{align*}
&\lim_{\epsilon\to 0}\int_{\mathbb R^{2(n-1)}}\int_{\Omega_{a_n}}
\frac{(F_0*K_\epsilon)\circ h_0(a_1,\dots,a_{n-1},a_n)}{\big|na_n+a_1+\cdots+a_{n-1}\big|}
\,d\sigma_1 (a_n)\, da_1\cdots da_{n-1}\\
&=\lim_{\epsilon\to 0}\int_{\mathbb R^{2(n-1)}}
\frac{1}{\sqrt{n}\sqrt{\,1-A'\,}}\left(\int_{\Omega_{a_n}}(F_0*K_\epsilon)\circ h_0(a_1,\dots,a_{n-1},a_n)\,
d\sigma_1 (a_n)\right)da_1\cdots da_{n-1}\\
&=\int_{\mathbb R^{2(n-1)}}
\frac{1}{\sqrt{n}\sqrt{\,1-A'\,}}\left(\int_{\Omega_{a_n}}\lim_{\epsilon\to 0}(F_0*K_\epsilon)\circ h_0(a_1,\dots,a_n)\,
d\sigma_1 (a_n)\right)da_1\cdots da_{n-1}\\
&=\int_{\mathbb R^{2(n-1)}}
\frac{1}{\sqrt{n}\sqrt{\,1-A'\,}}\left(\int_{\Omega_{a_n}}F_0\circ h_0(a_1,\dots,a_n)\,
d\sigma_1 (a_n)\right)da_1\cdots da_{n-1}\\
&=\int_{\mathbb R^{2(n-1)}}\chi_E(a_1,\dots,a_{n-1})\,
\frac{1}{\sqrt{n}\sqrt{\,1-A'\,}}\left(\int_{\Omega_{a_n}} d\sigma_1 (a_n)\right)da_1\cdots da_{n-1}.
\end{align*}
The passage of the limit in $\epsilon$ inside the integral is justified by the Lebesgue dominated 
convergence theorem. See the explanation provided in $(A)$, $(B)$, $(C)$ below. 

\paragraph{Case $d>2$.}
Write $z_i=(a_i,b_i)$ with $a_i\in\mathbb R^2$ and $b_i\in\mathbb R^{d-2}$, define
\[
B=\sum_{i=1}^{n-1}|b_i+b_n|^2+|b_n|^2,\quad
B'=\sum_{i=1}^{n-1}|b_i|^2-\frac{\big|\sum_{i=1}^{n-1}b_i\big|^2}{n}.
\]
Then one can see that 
\[B=\frac{1}{n}\Big|n b_n+\sum_{i=1}^{n-1}b_i\Big|^2+B'.
\]
Thus
\[
(\Phi_0\circ h_0)(z)=\sum_{i=1}^{n}|z_i|^2-1
=\frac{1}{n}\Big|n a_n+\sum_{i=1}^{n-1}a_i\Big|^2+A'+B-1.
\]
For fixed $(z_1,\dots,z_{n-1},b_n)$, define
\[
\Phi_0^{(z_1,\dots,z_{n-1},b_n)}(a_n)
=\frac{1}{n}\Big|n a_n+\sum_{i=1}^{n-1}a_i\Big|^2+A'+B-1.
\]
Then
\[
|\nabla_{a_n}\Phi_0^{(z_1,\dots,z_{n-1},b_n)}(a_n)|=2\Big|n a_n+\sum_{i=1}^{n-1}a_i\Big|,
\]
where $\nabla_{a_n}$ is the gradient on $\mathbb R^2$ with respect to $a_n$.

Define the level set  
$$\Omega_{a_n}:=\{a_n\in \mathbb R^2:\,\,\Phi_0^{(z_1,\dots,z_{n-1},b_n)}(a_n)=0\}$$ 
is a circle in $\mathbb R^2$. Applying \eqref{eq:slicing-pairing} to \eqref{eq:covariance-phi-h} with $(z_1,\dots,z_{n-1},b_n)$ fixed, we obtain

\begin{eqnarray*}
&&\hspace{-.4in}\sigma_{nd-1}^{(-)}(E)\\
&=&2\lim_{\epsilon\rightarrow 0}\big\langle (\Phi_0\circ h_0)^*\delta,(F_0*K_\epsilon)\circ h_0\big\rangle\\
&=&2\lim_{\epsilon\rightarrow 0}\Big\langle (\Phi_0^{(z_1,\dots,z_{n-1}, b_n)}(a_n))^*\delta,(F_0*K_\epsilon)\circ h_0|Dh_0|\Big\rangle\\
&=&2\lim_{\epsilon\rightarrow 0}\int\limits_{\mathbb R^{(n-1)d}}\int\limits_{\mathbb R^{d-2}}\int\limits_{\mathbb R^2}(F_0*K_\epsilon)\circ h_0(z_1,\dots ,z_{n-1},a_n,b_n) \\
&& \hspace{2.5in}\delta(\Phi_0^{(z_1,\dots,z_{n-1}, b_n)}(a_n))da_ndb_ndz_1\cdots dz_{n-1}\\
&=&\lim_{\epsilon\rightarrow 0}\int\limits_{\mathbb R^{(n-1)d}}\int\limits_{\mathbb R^{d-2}}2\Big\langle \delta(\Phi_0^{(z_1,\dots,z_{n-1}, b_n)}(a_n)),(F_0*K_\epsilon)\circ h_0\Big\rangle db_ndz_1\cdots dz_{n-1}\\
&=&\lim_{\epsilon\rightarrow 0}\int\limits_{\mathbb R^{(n-1)d}}\int\limits_{\mathbb R^{d-2}}\int\limits_{\Omega_{a_n}}\frac{ (F_0*K_\epsilon)\circ h_0(z_1,\dots,z_{n-1},a_n,b_n)}{|na_n+a_1+\cdots+a_{n-1}|}  d\sigma_1 (a_n)  db_ndz_1\cdots dz_{n-1}.
\end{eqnarray*}

Using on $\Omega_{a_n}$ the identity $\big|na_n+\sum_{i=1}^{n-1}a_i\big|=\sqrt{n}\sqrt{\,1-A'-B\,}$ and dominated convergence, we get
\begin{eqnarray*}
&&\hspace{-0.4in}\lim_{\epsilon\rightarrow 0}\int\limits_{\mathbb R^{(n-1)d}}\int\limits_{\mathbb R^{d-2}}\int\limits_{\Omega_{a_n}}\frac{(F_0*K_\epsilon)\circ h_0(z_1,\dots,z_{n-1},a_n,b_n)}{|na_n+a_1+\cdots+a_{n-1}|}d\sigma_1 (a_n) db_ndz_1\cdots dz_{n-1}\\
&=&\lim_{\epsilon\rightarrow 0}\int\limits_{\mathbb R^{(n-1)d}}\int\limits_{\mathbb R^{d-2}}\int\limits_{\Omega_{a_n}}\frac{(F_0*K_\epsilon)\circ h_0(z_1,\dots,z_{n-1},a_n,b_n)}{\sqrt{n}\sqrt{1-A'-B}}d\sigma_1 (a_n) db_ndz_1\cdots dz_{n-1}\\
&=&\lim_{\epsilon\rightarrow 0}\int\limits_{\mathbb R^{(n-1)d}}\int\limits_{\mathbb R^{d-2}}\frac{1}{\sqrt{n}\sqrt{1-A'-B}} \\
&&\hspace{1in} \int\limits_{\Omega_{a_n}}(F_0*K_\epsilon)\circ h_0(z_1,\dots,z_{n-1},a_n,b_n) 
d\sigma_1 (a_n)db_ndz_1\cdots dz_{n-1}.
\end{eqnarray*}
Passing the limit in $\epsilon$ inside (a fact that will be justified momentarily by the Lebesgue
dominated convergence theorem) we obtain that the preceding expression equals
\begin{eqnarray*}
&=&\int\limits_{\mathbb R^{(n-1)d}}\int\limits_{\mathbb R^{d-2}}\frac{1}{\sqrt{n}\sqrt{1-A'-B}} \\
&&\hspace{1in} \int\limits_{\Omega_{a_n}}\lim_{\epsilon\rightarrow 0}(F_0*K_\epsilon)
\circ h_0(z_1,\dots,z_{n-1},a_n,b_n) 
d\sigma_1 (a_n)db_ndz_1\cdots dz_{n-1}\\
&=&\int\limits_{\mathbb R^{(n-1)d}}\int\limits_{\mathbb R^{d-2}}\frac{1}{\sqrt{n}\sqrt{1-A'-B}} \int\limits_{\Omega_{a_n}}F_0\circ h_0(z_1,\dots,z_n) 
d\sigma_1 (a_n)db_ndz_1\cdots dz_{n-1}\\
&=&\int\limits_{\mathbb R^{(n-1)d}}\chi_E(z_1,\dots,z_{n-1})\int\limits_{\mathbb R^{d-2}}\frac{1}{\sqrt{n}\sqrt{1-A'-B}} \int\limits_{\Omega_{a_n}} 
d\sigma_1 (a_n)db_ndz_1\cdots dz_{n-1}.
\end{eqnarray*}

The dominated convergence is justified by the uniform bounds in  $\varepsilon$ in the following three situations: 

\smallskip
\emph{(A) Boundedness of $F_0*K_\varepsilon$.}
Trivially $0\le F_0*K_\varepsilon\le \|F_0\|_\infty\le 1$.

\smallskip
\emph{(B) Uniform bound of the sliced integral over $\Omega_{a_n}$.}
On $\Omega_{a_n}$ one has 
$$\big|n a_n+\sum_{i=1}^{n-1}a_i\big|=\sqrt{n}\sqrt{1-A'-B},$$ 
hence
\[
\frac{1}{\sqrt{n}\sqrt{1-A'-B}}\int_{\Omega_{a_n}}(F_0*K_\varepsilon)\circ h_0\,d\sigma_1(a_n)
\le \frac{1}{\sqrt{n}\sqrt{1-A'-B}}\int_{\Omega_{a_n}} d\sigma_1(a_n).
\]
The circle $\Omega_{a_n}$ has radius $\sqrt{1-A'-B}$, so $\int_{\Omega_{a_n}} d\sigma_1(a_n)=2\pi \sqrt{1-A'-B}$, and the right-hand side is
\[
\frac{2\pi}{\sqrt{n}},
\]
uniformly bounded in all parameters (including $\varepsilon$).

\smallskip
\emph{(C) Uniform bound of the $b_n$-integration (when $d>2$).}
Write
\[
y=\sqrt{n}\,a_n+\frac{1}{\sqrt{n}}\sum_{i=1}^{n-1}a_i,\qquad
r_0=\sqrt{1-A'-B'}\in[0,1],\qquad r=r_0 s\ (0\le s\le 1).
\]
Passing to polar coordinates in $y\in\mathbb R^2$,
\begin{align*}
&\hspace{-0.5in}\int_{\mathbb R^{d-2}}\frac{1}{\sqrt{n}\sqrt{1-A'-B}}
\Big(\int_{\Omega_{a_n}} d\sigma_1(a_n)\Big)\,db_n\\
&\lesssim \frac{1}{\sqrt{n}}\int_{\mathbb R^{d-2}}\frac{1}{\sqrt{1-A'-B}}\,db_n\\
&=\frac{1}{\sqrt{n}}\int_{\mathbb R^{d-2}}
\frac{1}{\sqrt{1-A'-B'-\frac{1}{n}|n b_n+\sum_{i=1}^{n-1}b_i|^2}}\,db_n\\
&= n^{\frac{d-2}{2}}\int_{\mathbb R^{d-2}} \frac{1}{\sqrt{r_0^2-|y|^2}}\,dy\\
&= n^{\frac{d-2}{2}}\int_0^{r_0} \frac{r^{d-3}}{\sqrt{r_0^2-r^2}}\,dr\\
&= n^{\frac{d-2}{2}}\,r_0^{d-2}\int_0^{1} \frac{s^{d-3}}{\sqrt{1-s^2}}\,ds\\
& \lesssim\ n^{\frac{d-2}{2}}.
\end{align*}
All constants are independent of $\varepsilon$.

\smallskip
Combining $(A)$, $(B)$, and $(C)$, justifies the use of the dominated convergence in
the preceding calculation and the derivation of the estimate 
\[
\sigma_{nd-1}^{(-)}(E)\ \lesssim\ n^{\frac{d-2}{2}}\,|E|.
\]
Therefore $\sigma_{nd-1}^{(-)}$ is absolutely continuous with respect to Lebesgue measure and
\[\bigg\| \frac{d\sigma_{nd-1}^{(-)}}{dz_1\cdots dz_{n-1}} \bigg\|_\infty\lesssim n^{(d-2)/2},
\]
as claimed.
\end{proof}

\begin{proof}[Proof of Theorem~\ref{th1}]
Let $d\ge 2$ and $f_i\ge 0$ be compactly supported smooth functions on $\mathbb R^d$. Using the pushforward measure and Proposition~\ref{p2},
\begin{align*}
\|\mathcal A^n(f_1,\dots,f_n)\|_{L^1}
&=\int_{\mathbb R^d}\int_{\mathbb S^{nd-1}} \prod_{l=1}^n f_l(x-y_l)\,d\sigma_{nd-1}(y)\,dx\\
&=\int_{\mathbb R^d} f_n(x)\int_{\mathbb S^{nd-1}}\prod_{l=1}^{n-1} f_l\big(x-(y_l-y_n)\big)\,d\sigma_{nd-1}(y)\,dx\\
&=\int_{\mathbb R^d} f_n(x)\int_{\mathbb R^{(n-1)d}} \prod_{l=1}^{n-1} f_l(x-z_l)\,d\sigma_{nd-1}^{(-)}(z)\,dx\\
&=\int_{\mathbb R^d} f_n(x)\int_{\mathbb R^{(n-1)d}} \prod_{l=1}^{n-1} f_l(x-z_l)\,
\frac{d\sigma_{nd-1}^{(-)}(z)}{dz_1\cdots \,dz_{n-1}}\,dz_1\cdots \,dz_{n-1}\,dx\\
&\lesssim \int_{\mathbb R^d} f_n(x)\int_{\mathbb R^{(n-1)d}} \prod_{l=1}^{n-1} f_l(x-z_l)\,dz_1\cdots \,dz_{n-1}\,dx\\
&=\prod_{l=1}^{n}\|f_l\|_{L^1(\mathbb R^d)}.
\end{align*}
This proves the desired $L^1\times\cdots\times L^1\to L^1$ bound, which, by the localization criterion stated earlier, yields the general $L^{p_1}\times\cdots\times L^{p_n}\to L^r$ mapping property.
\end{proof}

\noindent\textbf{Remark}. One can see that when $n\geq 2$ the bounds obtained are  nontrivial when $r<1$; however, the bounds for $r\ge 1$ can be deduced by 
Minkowski's integral inequality via a straightforward way:
\begin{align*}
  \lVert\mathcal A^n(f_1,\dots,f_n) \rVert_{L^r} 
=&\bigg(\int_{\mathbb R^d}\Big\lvert\int_{\mathbb S^{nd-1}}f_1(x-y_1)\cdots f_n(x-y_n)\,d\sigma_{nd-1} (y_1,\dots ,y_n)\Big\rvert ^r\,dx\bigg)^{\frac{1}{r}}\\
\leq&\int_{\mathbb S^{nd-1}}\bigg(\int_{\mathbb R^d} \lvert f_1(x-y_1)\cdots f_n(x-y_n)\rvert ^r\,dx\bigg)^{\frac{1}{r}}\,d\sigma_{nd-1} (y_1,\dots ,y_n)\\
=&\int_{\mathbb S^{nd-1}}\prod_{j=1}^n\bigg(\int_{\mathbb R^d} \lvert f_j(x-y_j)\rvert^{p_j} dx\bigg)^{\frac{1}{p_j}}\,d\sigma_{nd-1}(y)\\
=&\lVert f_1\rVert_{L^{p_1}}\cdots\lVert f_n\rVert_{L^{p_n}}. 
\end{align*}

\section[Bounds for the lacunary spherical maximal operator M-lac-n]%
{Bounds for the lacunary spherical maximal operator $\mathcal{M}_{\mathrm{lac}}^{n}$}

Throughout this section we assume $d\ge 2$. We begin with brief historical context.
 Jeong and Lee~\cite{JL2019} obtained the  region in which strong type bounds $L^{p_1}(\mathbb R^d) \times L^{p_2}(\mathbb R^d)$ to $L^r(\mathbb R^d)$ hold for the bilinear spherical maximal operator $\mathcal M^2$ and suggested that their approach extends to the multilinear case.  Dosidis~\cite{D2019} obtained  the  largest possible region of strong type bounds $L^{p_1}(\mathbb R^d)\times\dots\times L^{p_n}(\mathbb R^d)$ to $L^r(\mathbb R^d)$   for $\mathcal M^n$.  Recall that $\mathcal M^n$ is defined as follows:
$$
\mathcal M^n(f_1,f_2,\dots ,f_n)(x) := \sup_{t>0} \bigg|\int_{\mathbb S^{nd-1}}f_1(x-ty_1)\cdots f_n(x-ty_n)\,d\sigma_{nd-1} (y_1,\dots ,y_n)\bigg|,
$$
 where $f_i:\mathbb R^d\rightarrow \mathbb R$, $i\in \{1,2,\dots,n\}$ are compactly  supported smooth functions, for convenience, we assume that they are nonnegative, $x,y_1,\dots ,y_n\in \mathbb R^d$,
$\sigma_{nd-1} (y_1,\dots ,y_n)$ is  the spherical measure on $\mathbb S^{nd-1}$.

Define 
$$\mathcal A_t^n(f_1,f_2,\dots ,f_n)(x)=\int_{\mathbb S^{nd-1}}f_1(x-ty_1)\cdots f_n(x-ty_n)\,d\sigma_{nd-1} (y_1,\dots ,y_n),$$ then we have 

$$
\mathcal M^n(f_1,f_2,\dots ,f_n)(x) := \sup_{t>0} \bigg|A_t^n(f_1,f_2,\dots ,f_n)\bigg|.
$$
We now introduce some notation. 
For   $n\geq 2$ and exponents $1\le  p_1,\dots ,p_n \le  \infty$ satisfying  $\frac{1}{p_1}+\cdots +\frac{1}{p_n}=\frac{1}{r}$ we define
\begin{eqnarray*}
 \mathscr R &= &\Big\{\Big(\frac{1}{p_1},\dots ,\frac{1}{ p_n}\Big): \, 1\le  p_1,\dots ,p_n \le  \infty, \, r > \frac{n}{dn-1}\Big\},\\
 \mathscr N&= &\Big\{\Big(\frac{1}{p_1},\dots ,\frac{1}{ p_n}\Big): \, p_1,\dots ,p_n \in \{1,\infty\}\Big\} ,\\ 
 \mathscr C&= &\Big\{\Big(\frac{1}{p_1},\dots ,\frac{1}{ p_n}\Big): \, 1\le p_1,\dots ,p_n \le  \infty, \, r = \frac{n}{dn-1}\Big\},\\
 \mathscr L &=& \big(\mathscr R\setminus \mathscr N\big)\;\cup\; \operatorname{int}(\mathscr C)\;\cup\;\{(0,\dots,0)\},
\end{eqnarray*}
where $\operatorname{int}\mathscr C$ indicates the interior of $ \mathscr C$.
We keep the notation here consistent with \cite{D2019}.

\begin{theorem}\cite{D2019}\label{th2} Let $d\ge2$, $1\le  p_1,\dots ,p_n \le  \infty$ and $\frac{1}{p_1}+\cdots +\frac{1}{p_n}=\frac{1}{r}$. Then the largest possible region for which strong type bounds $L^{p_1}(\mathbb R^d)\times\dots\times L^{p_n}(\mathbb R^d)$ to $L^r(\mathbb R^d)$ hold for the multilinear spherical maximal operator $\mathcal M^n$ is $\mathscr L$. 
\end{theorem}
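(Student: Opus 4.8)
The plan is to prove the two directions separately: first that $\mathcal M^n$ is bounded $L^{p_1}\times\cdots\times L^{p_n}\to L^r$ for every tuple $(\tfrac1{p_1},\dots,\tfrac1{p_n})\in\mathscr L$ (sufficiency), and then that the bounds fail for every tuple outside $\mathscr L$ (necessity), so that together these identify $\mathscr L$ as the largest region. Both directions reduce to a single-scale analysis of the average $\mathcal A_1^n$ followed by a recombination over the dyadic dilations $t=2^j$, the only difference between $\mathcal M^n$ and the lacunary operator being how these scales are summed. The sufficiency direction I would obtain by establishing estimates at a finite collection of ``building-block'' data and then invoking multilinear (restricted weak type) interpolation; the Minkowski-sum description $\mathscr L=\mathscr R-\mathscr N+\mathrm{int}\,\mathscr C+\{(0,\dots,0)\}$ is precisely the region such interpolation produces from these blocks, so matching the two is the organizing principle of the argument.

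For sufficiency, I would first record the trivial building blocks sitting at the corners $\mathscr N$: when all but one input is placed in $L^\infty$, the slicing identity used by Jeong and Lee~\cite{JL2019} dominates $\mathcal M^n(f_1,\dots,f_n)$ pointwise by $\mathcal M(f_i)$ times Hardy--Littlewood maximal functions (or $L^\infty$ norms) of the remaining inputs, so that Stein's spherical maximal theorem~\cite{S1976} for $d\ge3$ and Bourgain's~\cite{B1985} for $d=2$ supply boundedness there. Second, and crucially, I would establish the sharp single-scale $L^p$-improving estimate for $\mathcal A_1^n$ at the critical exponent $r=\tfrac{n}{mn-1}$: using the bounded, compactly supported Radon--Nikodym density of $\sigma_{nd-1}(-)$ from Proposition~\ref{p2} together with the decay $|\widehat{\sigma_{nd-1}}(\xi)|\lesssim|\xi|^{-(nd-1)/2}$, one performs a Littlewood--Paley decomposition of the multiplier, estimates each frequency piece by a $TT^*$/stationary-phase argument, and sums. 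Interpolating these single-scale gains against the $L^1\times\cdots\times L^1\to L^1$ bound of Theorem~\ref{th1} and against the corner estimates yields the maximal bound on $\mathrm{int}\,\mathscr C$ and throughout $\mathscr R$, and the interpolation then combines them to cover all of $\mathscr L$.

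For necessity I would produce two families of counterexamples. To show the critical hyperplane cannot be crossed, I would test $\mathcal M^n$ on bumps $f_i=\chi_{B(0,\delta)}$ (or suitably tensor-dilated data) and let $\delta\to0$: a direct computation of $\|\mathcal M^n(f_1,\dots,f_n)\|_{L^r}$ against $\prod_i\|f_i\|_{L^{p_i}}$ shows the ratio diverges once $r<\tfrac{n}{mn-1}$, pinning down $\mathscr C$ as a genuine boundary. To explain why the corner set $\mathscr N$ must be removed rather than included, I would exploit the failure of the linear spherical maximal operator to be of strong type at $L^{d/(d-1)}$ and its unboundedness on $L^1$; placing the corresponding input in $L^1$ and the rest in $L^\infty$ reproduces this linear obstruction in the multilinear setting, so strong bounds cannot hold at those vertices. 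Comparing the excluded set with $\mathscr L$ then confirms optimality.

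The main obstacle is the sharp single-scale estimate on $\mathrm{int}\,\mathscr C$ together with its summation over scales: this is where the genuine harmonic analysis lives, requiring the full strength of the Fourier decay of $\sigma_{nd-1}$, a local-smoothing or square-function estimate to sum the dyadic pieces without losing the endpoint, and---when $d=2$---a substitute for Bourgain's two-dimensional argument at the base of the induction. Controlling the full maximal function (supremum over all $t>0$, not merely the lacunary $t=2^j$) forces the summation to be carried out with particular care, and verifying that interpolation exactly reconstructs $\mathscr L$, including the role of $\mathrm{int}\,\mathscr C$ versus its closure, is the delicate bookkeeping that completes the proof.
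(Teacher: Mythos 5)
First, a point of order: the paper itself contains no proof of this statement --- Theorem~\ref{th2} is imported wholesale from Dosidis~\cite{D2019} (building on Jeong--Lee~\cite{JL2019}) and is used only as background for Theorem~\ref{th3} --- so your sketch can only be judged against the argument in that reference. Judged that way, it has a genuine flaw at its center: the step ``interpolating these single-scale gains against the $L^1\times\cdots\times L^1\to L^1$ bound of Theorem~\ref{th1} and against the corner estimates yields the maximal bound on $\mathrm{int}\,\mathscr C$ and throughout $\mathscr R$'' is not a legitimate operation. Interpolation produces bounds for one fixed operator from bounds for that \emph{same} operator; estimates for the single-scale average $\mathcal A_1^n$ cannot serve as interpolation endpoints for the maximal operator $\mathcal M^n$. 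Worse, the maximal analogue of your proposed endpoint is false: the point $(1,\dots,1)$ lies strictly below the critical hyperplane (since $n>\frac{nd-1}{d}$), so $\mathcal M^n$ does \emph{not} map $L^1\times\cdots\times L^1\to L^{1/n}$; any scheme in which Theorem~\ref{th1} could be fed in as an endpoint for maximal bounds would prove false statements. Passing from single-scale estimates to a supremum over all continuous $t>0$ is exactly where the difficulty of the theorem lives, and it requires dedicated machinery (Sobolev embedding in $t$ with control of $\partial_t\mathcal A^n_t$, square functions, or local smoothing), not bookkeeping.

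Second, even restricting to honest maximal-operator estimates, your scheme cannot reach the endpoint pieces of $\mathscr L$. Interpolating the slicing bounds $\mathcal M^n(f_1,\dots,f_n)\lesssim \mathcal M(f_i)\prod_{j\neq i}M(f_j)$, valid on the open rectangles $p_i>\frac{d}{d-1}$, $p_j>1$, yields only the open convex hull of those rectangles: $\mathrm{int}\,\mathscr C$ lies on the \emph{boundary} of that hull, and the faces of $\mathscr L$ where some $p_j=1$ (which do belong to $\mathscr L$, as Figure~1 shows) are unreachable because the Hardy--Littlewood maximal operator is not strong type $(1,1)$. These cases are precisely what \cite{JL2019} and \cite{D2019} work hardest for, using sharper endpoint inputs --- Lorentz-space bounds for the linear spherical maximal operator and $L^p$-improving estimates for localized (single dyadic scale) spherical maximal functions --- combined through the slicing recursion and summed over scales exploiting $r<1$; your ``main obstacle'' paragraph names this difficulty but offers no mechanism for it. On the necessity side, your small-ball example does pin down the critical hyperplane $r\ge\frac{d}{nd-1}$ (incidentally exposing the typo $\frac{n}{mn-1}$ in the paper's definition of $\mathscr R$ and $\mathscr C$), and Dirac-type examples kill the corners $\mathscr N$; but $\mathscr L$ also excludes the relative boundary of the critical face (for $n=2$, the points $(1,\frac{d-1}{d})$ and $(\frac{d-1}{d},1)$), and failure there does not follow from scaling --- it needs Stein-type examples with logarithmic corrections, which your plan never constructs.
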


Now, we recall the bound for $\mathcal M_{\mathrm{lac}}^2$ proved by Borges  and Foster   in \cite{BB2024} and then extend it to the maximal operator $\mathcal{M}_{\mathrm{lac}}^n$. 
Our extension follows by adapting the ideas in Section 4 of \cite{BB2024} and Section 4 of \cite{HHY2020}  and is based on  Theorem~\ref{th1}.

\begin{theorem}\cite{BB2024} Let $d\geq 2$, $1< p_1,p_2\leq \infty$ and $\frac{1}{r}=\frac{1}{p_1}+\frac{1}{p_2}$. Then for $f,g\in \mathcal S(\mathbb R^d)$ we have
    \begin{equation*}
     \big\lVert\mathcal M^2_{\mathrm{lac}}(f,g) \big\rVert_{L^r}= \big\lVert\sup_{l \in \mathbb Z}\big|\mathcal A^2_{2^{-l}}(f,g)\big|\big\rVert_{L^r}\lesssim   \lVert f\rVert_{L^{p_1}}\lVert g\rVert_{L^{p_2}}.
    \end{equation*}
    Moreover, $\mathcal M^2_{\mathrm{lac}}$ does not satisfy strong bounds $L^1\times L^{\infty}\rightarrow L^{1}$ or $L^{\infty}\times L^1\rightarrow L^1$, but it does satisfy weak bounds $\mathcal M^2_{\mathrm{lac}}:L^1\times L^{\infty}\rightarrow L^{1,\infty}$ and $\mathcal M^2_{\mathrm{lac}}:L^{\infty}\times L^{1}\rightarrow L^{1,\infty}$.
\end{theorem}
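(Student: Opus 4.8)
The plan is to separate the supremum over lacunary dilations from the bilinear spherical structure by a Littlewood--Paley decomposition of the measure $\sigma_{2d-1}$ in frequency, reducing matters to a single-scale estimate for each dyadic frequency band, where Theorem~\ref{th1} is available. Fix $\psi\in\mathcal C_0^\infty(\mathbb R^{2d})$ supported in an annulus $\{|\zeta|\sim 1\}$ with $\sum_{j\in\mathbb Z}\psi(2^{-j}\zeta)=1$ for $\zeta\neq 0$, together with a bump $\Psi_0$ adapted to $\{|\zeta|\lesssim 1\}$, and decompose $\widehat{\sigma_{2d-1}}=m_0+\sum_{j\ge 1}m_j$ with $m_0=\Psi_0\,\widehat{\sigma_{2d-1}}$ and $m_j=\psi(2^{-j}\,\cdot)\,\widehat{\sigma_{2d-1}}$. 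Writing $\mathcal A^2_{t,j}$ for the bilinear operator with multiplier $m_j(t\,\cdot)$, so that $\mathcal A^2_t=\sum_{j\ge 0}\mathcal A^2_{t,j}$, the goal is to prove, for each $j\ge 1$, a geometric gain
$$\big\lVert\sup_{l\in\mathbb Z}|\mathcal A^2_{2^{-l},j}(f,g)|\big\rVert_{L^r}\lesssim 2^{-\delta j}\lVert f\rVert_{L^p}\lVert g\rVert_{L^q}$$
for some $\delta=\delta(d)>0$, and then to sum the geometric series in $j$; the low-frequency term is treated separately.

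For the low-frequency piece, $m_0$ is the Fourier transform of a Schwartz function $K_0$ on $\mathbb R^{2d}$, so that $\mathcal A^2_{t,0}(f,g)(x)=\iint f(x-ty)g(x-tz)K_0(y,z)\,dy\,dz$. Bounding $|K_0(y,z)|\lesssim(1+|y|)^{-N}(1+|z|)^{-N}$ factors the integral and gives the pointwise estimate $|\mathcal A^2_{t,0}(f,g)(x)|\lesssim Mf(x)\,Mg(x)$, uniformly in $t>0$, where $M$ is the Hardy--Littlewood maximal operator. Taking the supremum over $l$, then applying H\"older's inequality and the $L^p,L^q$ boundedness of $M$ --- this is exactly where the hypotheses $p,q>1$ are used --- yields $\big\lVert\sup_l|\mathcal A^2_{2^{-l},0}(f,g)|\big\rVert_{L^r}\lesssim\lVert f\rVert_{L^p}\lVert g\rVert_{L^q}$.

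The high-frequency bands carry the genuine difficulty. To turn the lacunary supremum into objects accessible to a single-scale bound, I would use the Sobolev embedding in the dilation parameter: setting $u(s)=\mathcal A^2_{2^{-s},j}(f,g)(x)$ as a function of continuous $s\in\mathbb R$, the inequality $\sup_s|u(s)|^2\le 2\,\big(\int_{\mathbb R}|u|^2\,ds\big)^{1/2}\big(\int_{\mathbb R}|u'|^2\,ds\big)^{1/2}$ dominates the lacunary maximal function by the geometric mean of a zeroth-order square function built from $\mathcal A^2_{t,j}$ and a first-order one built from $t\,\partial_t\mathcal A^2_{t,j}$. Each square function is then estimated by a vector-valued form of the single-scale bound of Theorem~\ref{th1}: the stationary-phase decay $|\widehat{\sigma_{2d-1}}(\zeta)|\lesssim|\zeta|^{-(2d-1)/2}$ supplies a negative power of $2^j$ for each band, the $t$-derivative in the first-order square function costs one factor $2^j$, and the near-orthogonality of the annular pieces (only boundedly many scales $l$ interact with a fixed frequency) keeps the $s$-integral finite; balancing these produces the net gain $2^{-\delta j}$ with $\delta>0$ because $d\ge 2$. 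I expect the main obstacle to be the exponent bookkeeping when $r<1$, which occurs already for $p=q$ slightly above $1$: there $L^r$ is only a quasi-norm, Minkowski's inequality fails, and the passage from the pointwise square-function domination to the $L^r$ estimate must be organized so that H\"older and the single-scale estimate of Theorem~\ref{th1} can be applied band-by-band and then summed. This is the technical heart adapted from Section~4 of \cite{BB2024} and Section~4 of \cite{HHY2020}.

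For the endpoint assertions I would reduce to the linear theory. When $g\in L^\infty$, the slicing identity gives $\mathcal A^2_t(f,g)(x)\lesssim\lVert g\rVert_{L^\infty}\int_{|y|\le 1}|f(x-ty)|(1-|y|^2)^{\frac{d-2}{2}}\,dy$, and since the density $(1-|y|^2)^{\frac{d-2}{2}}$ is bounded on the unit ball for $d\ge 2$, the right-hand side is $\lesssim\lVert g\rVert_{L^\infty}Mf(x)$ uniformly in $t$; hence $\mathscr M^2_{lac}(f,g)\lesssim\lVert g\rVert_{L^\infty}Mf$, and the weak-$(1,1)$ bound for $M$ yields $\mathscr M^2_{lac}:L^1\times L^\infty\to L^{1,\infty}$, the roles of $f$ and $g$ being symmetric for $L^\infty\times L^1\to L^{1,\infty}$. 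The failure of the strong bound $L^1\times L^\infty\to L^1$ follows from a counterexample obtained by freezing $g$ to be locally constant, which reduces $\mathscr M^2_{lac}(f,g)$ to a lacunary ball-average maximal function of $f$ of Hardy--Littlewood type; choosing $f$ to be the indicator of a small ball makes this decay like $|x|^{-d}$ at infinity, which is not integrable.
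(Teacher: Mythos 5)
Your endpoint arguments are essentially correct: the slicing identity puts the $f$-variable on a ball rather than a sphere, so $\mathcal A^2_t(f,g)\lesssim \lVert g\rVert_{L^\infty}Mf$ uniformly in $t$, the weak $(1,1)$ bound for $M$ gives the weak-type conclusions, and the indicator-of-a-small-ball example with $g\equiv 1$ does produce the $|x|^{-d}$ lower bound that defeats the strong $L^1\times L^\infty\to L^1$ estimate. The low-frequency piece is also handled correctly, since $\Psi_0\widehat{\sigma_{2d-1}}$ has a Schwartz inverse Fourier transform. (For the record, the paper does not prove this theorem at all --- it is quoted from \cite{BB2024}; what the paper proves is the $n$-linear extension, Theorem~\ref{th3}, for $p_i>1$ only.) The genuine gaps are in your treatment of the high-frequency bands, which is where the whole difficulty lives.

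First, with a decomposition in the joint frequency $\zeta=(\xi_1,\xi_2)$, the claimed single-band gain $2^{-\delta j}$ is unsupported. In the linear theory the gain comes from Plancherel, $\lVert \widehat f\,\psi(2^{-j}\cdot)\widehat{\sigma_{d-1}}\rVert_{L^2}\lesssim 2^{-j(d-1)/2}\lVert f\rVert_{L^2}$; bilinearly there is no such identity for $L^p\times L^q\to L^r$, the kernel of your band operator is essentially $\sigma_{2d-1}$ mollified at scale $2^{-j}$, whose $L^1$ norm is $\approx 1$, so the kernel estimate gives no decay, and the pointwise bound $|\widehat{\sigma_{2d-1}}(\zeta)|\lesssim|\zeta|^{-(2d-1)/2}$ alone produces no bilinear Lebesgue estimate whatsoever. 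The decay mechanism that actually works (in \cite{HHY2020}, \cite{BB2024}, and in Proposition~\ref{4.1} of this paper) requires localizing each input separately, via $\widehat\psi(2^{-i_1}\xi_1)\widehat\psi(2^{-i_2}\xi_2)$, precisely because slicing then factors the band operator into products of frequency-localized \emph{linear} spherical averages, each contributing $2^{-i(d-1)/2}$ by $L^2$ theory; your joint cutoff $\psi(2^{-j}|\zeta|)$ destroys this factorization. Second, the "near-orthogonality across $l$" you invoke fails in the bilinear setting: a piece supported in $|(\xi_1,\xi_2)|\sim 2^{j+l}$ has output frequency $\xi_1+\xi_2$ filling a ball of radius $\sim 2^{j+l}$ (cancellation between $\xi_1$ and $\xi_2$), and neither $f$ nor $g$ is individually localized by the joint cutoff, so neither inputs nor outputs are orthogonal as $l$ varies, and the square-function/Sobolev-embedding summation does not close. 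Finally, the case $r<1$ is not "exponent bookkeeping": since the lacunary suprema of the band pieces cannot simply be summed in the quasi-norm, the paper (following \cite{HHY2020}, \cite{BB2024}) runs a bootstrap on the truncated maximal operator --- an $L^r(l^\infty)$ vector-valued bound in terms of the unknown constant $A_{\mathcal N}$ itself, an $L^r(l^r)$ bound from the single-scale norms, interpolation to $L^r(l^{2r})$, the Littlewood--Paley theorem, and then absorption of the factor $A_{\mathcal N}^{1/2}$ --- and no counterpart of this mechanism appears in your outline.
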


We provide an extension of the theorem above to the case of $n$-linear spherical averages. Precisely, we have the following result:
\begin{theorem}\label{th3}
    Let $d\ge2$, $1 < p_1,\dots, p_n\le \infty$ and  $\frac{1}{p_1}+\cdots +\frac{1}{p_n}=\frac {1}{r}$. Then 
    the $n$-linear lacunary spherical maximal function $\mathcal M_{\mathrm{lac}}^n $ is bounded from $L^{p_1}(\mathbb R^d)\times   \dots \times L^{p_n}(\mathbb R^d)$ to $L^r(\mathbb R^d)$. 
\end{theorem}

\noindent\textbf{Remark.}
Let $1 < p_1, \dots, p_n \le \infty$ and $\frac{1}{p_1} + \cdots + \frac{1}{p_n} = \frac{1}{r}$. 
It is immediate that if the $n$-linear spherical maximal operator $\mathcal{M}^n$ is bounded from 
$L^{p_1}(\mathbb{R}^d) \times \cdots \times L^{p_n}(\mathbb{R}^d)$ to $L^r(\mathbb{R}^d)$, 
then its lacunary analogue $\mathcal{M}^n_{\mathrm{lac}}$ is also bounded on the same product spaces.

Theorem~\ref{th2} establishes that the $n$-linear spherical maximal operator $\mathcal{M}^n$ is bounded from 
$L^{p_1}(\mathbb{R}^d) \times \cdots \times L^{p_n}(\mathbb{R}^d)$ to $L^r(\mathbb{R}^d)$, 
that is, $\mathcal{M}^n$ is bounded in the region $\mathscr{L}$. 
Consequently, the $n$-linear \emph{lacunary} spherical maximal operator $\mathcal{M}^n_{\mathrm{lac}}$ 
is also bounded in the same region $\mathscr{L}$.

Furthermore, Theorem~\ref{th3} covers the entire interior of the admissible cube of indices. 
What remains open are certain endpoint cases on the boundary 
$\mathscr{N} \setminus \{(0, \dots, 0)\}$, 
as well as the regime where $r \le \frac{n}{nd - 1}$ and $p_i = 1$ for some $i \in \{1, \dots, n\}$. 
In particular, it is still unknown whether the \emph{linear} lacunary spherical maximal operator 
$\mathcal{M}_{\mathrm{lac}}$ is of weak type~$(1,1)$. 
Nevertheless, Seeger, Tao, and Wright~\cite{STW2002} established the following Orlicz–weak estimate:
\[
\big|\{x \in \mathbb{R}^n : \mathcal{M}_{\mathrm{lac}} f(x) > \alpha\}\big|
\le C \int_{\mathbb{R}^n} \frac{|f(x)|}{\alpha} 
\log\log\!\left(e^2 + \frac{C|f(x)|}{\alpha}\right) dx,
\]
for all measurable functions $f$ and all $\alpha > 0$, where $C > 0$ is an absolute constant.

We now discuss a few ingredients that will be useful in the proof of Theorem~\ref{th3}.
Inspired by Heo, Hong, and Yang~\cite{HHY2020}, 
 we introduce the following  decomposition and prove     some lemmas.
Choose a smooth function  $\varphi$ on $\mathbb R^d$ such that
\begin{equation*}
\widehat{\varphi}(\xi)= \begin{cases}
   1 & \text{ if }|\xi|\leq 1,\\
   0 & \text{ if }|\xi|\geq 2.
    \end{cases}
\end{equation*}

Let $\widehat{\psi}(\xi)=\widehat{\varphi}(\xi)-\widehat{\varphi}(2\xi)$, which is supported in $\{1/2<|\xi|<2\}$. Then for all $\xi$ we have
\begin{equation*}
\widehat{\varphi}(\xi)+\sum_{j=1}^{\infty} \widehat{\psi}(2^{-j}\xi)\equiv 1.
\end{equation*}

For $f_1,\dots,f_n\in \mathcal S$, we have the following identity
\begin{align*}
1&\equiv \prod_{i=1}^n\Big(\widehat{\varphi}(\xi_i) + \sum_{j \ge 1} \widehat{\psi}(2^{-j}\xi_i)\Big)\\
&=\sum_{\emptyset \neq B \subseteq \{1,\dots,n\}} 
(-1)^{|B|+1}
\prod_{i \in B} \widehat{\varphi}(2^{-\ell}\xi_i)
\;+\;
\sum_{i_1,\dots,i_n \ge 1}
\prod_{k=1}^n \widehat{\psi}(2^{-i_k-\ell}\xi_k)
.
\end{align*}
Then we can define the follows:
\begin{definition}\label{def:localized-pieces}
For integers $i_1, \dots, i_n \ge 1$, define
\begin{align*}\label{eq:A-dyadic}
&\hspace{-0.5in}\mathcal{A}_t^{i_1,\dots,i_n}(f_1,\dots,f_n)(x)\\
:=&\int_{\mathbb R^{nd}}\widehat{f_1}(\xi_1)\cdots\widehat{f_n}(\xi_n)\widehat{\sigma_{nd-1}}\bigl(t(\xi_1,\dots,\xi_n)\bigr)
\prod_{k=1}^n\widehat{\psi}(2^{-i_k}\xi_k)\cdot e^{2\pi i x\cdot (\xi_1+\cdots+ \xi_n)}\,d\xi_1\cdots d\xi_n.
\end{align*}
\end{definition}

\begin{definition}
Let 
\[
\mathcal{B} := \{B = (b_1, \dots, b_n) : b_k \in \{0, \infty\}\},
\]
and for each $B \in \mathcal{B}$ set
\[
I_B := \{k : b_k = 0\},
\qquad
I_B^c := \{1, \dots, n\} \setminus I_B.
\]

For Schwartz functions $f_1, \dots, f_n \in \mathcal{S}(\mathbb{R}^d)$ and $t > 0$, define
\begin{align*}
&\hspace{-0.5in}\mathcal{A}_t^B(f_1,\dots,f_n)(x)\\
:=&\int_{\mathbb R^{nd}}\widehat{f_1}(\xi_1)\cdots\widehat{f_n}(\xi_n)\widehat{\sigma_{nd-1}}(t\xi_1,\dots, t\xi_n)\cdot\prod_{k \in I_B}\widehat{\varphi}(\xi_k)\cdot e^{2\pi i x\cdot (\xi_1+\cdots+ \xi_n)}\,d\xi_1\cdots d\xi_n.
\end{align*}
\end{definition}

Then we obtain the decomposition, for $x \in \mathbb{R}^d$,
\begin{equation}\label{eq:master-decomposition}
\begin{split}
&\hspace{-0.5in}\mathcal{A}_t^n(f_1, \dots, f_n)(x)\\
&= (-1)^{|B|+1}\sum_{B \in \mathcal{B}} \mathcal{A}_t^B(f_1, \dots, f_n)(x)
\;+\;
\sum_{i_1, \dots, i_n \ge 1}
\mathcal{A}_t^{i_1, \dots, i_n}(f_1, \dots, f_n)(x).
\end{split}
\end{equation}
follows from the identity
\[
\int_{\mathbb{S}^{nd-1}}
\prod_{i=1}^n f_i(x - t y_i)\, d\sigma_{nd-1}(y_1, \dots, y_n)
=
\int_{\mathbb{R}^{nd}}
\Big(\prod_{i=1}^n \widehat{f_i}(\xi_i)\Big)
\, \widehat{\sigma_{nd-1}}(t\xi_1, \dots, t\xi_n)
\, e^{2\pi i x \cdot (\xi_1 + \cdots + \xi_n)} \, d\xi.
\]

We are going to define  lacunary maximal spherical operators adapted to this decomposition  accordingly.  
\begin{definition}
For $i_1,\dots,i_n\geq 1$ define
$$
\mathcal M_{\mathrm{lac}}^{i_1,\dots,i_n}(f_1,\dots,f_n)(x):=\sup_{l\in\mathbb Z}\big|\mathcal A_{2^{-l}}^{i_1,\dots,i_n}(f_1,\dots,f_n)(x)\big|
$$
and for     $B\in \mathcal B$ define 
 $$
 \mathcal M_{\mathrm{lac}}^B(f_1,\dots,f_n)(x):=\sup_{l\in \mathbb Z}\big|\mathcal A_{2^{-l}}^B(f_1,\dots,f_n)(x)\big| .
 $$
\end{definition}
Then by \eqref{eq:master-decomposition}, we will have the following for $\mathcal M^n_{\mathrm{lac}}$:
$$
\big\|\mathcal M^n_{\mathrm{lac}}\big\|_{L^{p_1}\times \cdots\times L^{p_n}\rightarrow L^r}^{\min (1,r)}
\leq\sum_{i_1,\dots,i_n\geq 1}\big\|\mathcal M_{\mathrm{lac}}^{i_1,\dots,i_n}\big\|_{L^{p_1}\times \cdots\times L^{p_n}\rightarrow L^r}^{\min (1,r)}+\sum_{B\in \mathcal B}\big\|\mathcal M_{\mathrm{lac}}^B\big\|_{L^{p_1}\times \cdots\times L^{p_n}\rightarrow L^r}^{\min (1,r)}.
$$

\noindent\textbf{Remark.}
An important fact is the scale-invariance of the localized pieces 
$\mathcal{A}^{i_1,\dots,i_n}_{2^{-l}}$. That is, 
for all $l\in \mathbb Z$, and H\"older exponents $\frac{1}{r}=\frac{1}{p_1}+\cdots+\frac{1}{p_n}$  one has
$$
\big\|\mathcal{A}^{i_1,\dots,i_n}_{2^{-l}}\big\|_{L^{p_1}\times \cdots\times L^{p_n}\rightarrow L^r}=\big\|\mathcal{A}^{i_1,\dots,i_n}_1\big\|_{L^{p_1}\times \cdots\times L^{p_n}\rightarrow L^r}:=C(i_1,\dots,i_n,p_1,\dots,p_n),
$$
which, as observed in \cite{HHY2020}, follows from the fact that for all $i_1,\dots,i_n\geq 1$:
\begin{equation*}
    \mathcal{A}^{i_1,\dots,i_n}_{2
^{-l}}(f_1,\dots,f_n)(x)=\mathcal{A}^{i_1,\dots,i_n}_1(D_{2^{-l}}f_1,\dots, D_{2^{-l}}f_n)(2^l x),
\end{equation*}
where $D_{2^{-l}}f(x)=f(2^{-l}x)$.

\begin{pp}\label{4.1} Let $1\leq p_1,\dots,p_n\leq \infty$ and $0< r\leq\infty$ be such that $\frac{1}{r}=\frac{1}{p_1}+\cdots+\frac{1}{p_n}$. Then for all $i_1,\dots,i_n\geq 1$ and $f_1,\dots,f_n \in \mathcal S$   Schwartz functions, we have:
\begin{equation*}
\big\lVert\mathcal{A}^{i_1,\dots,i_n}_1(f_1,\dots,f_n)\big\rVert_{L^r}\lesssim C\big\lVert f_1\big\rVert_{L^{p_1}}\cdots\big\lVert f_n\big\rVert_{L^{p_n}},
\end{equation*}
\begin{equation*}
\big\|\mathcal A_1^{i_1,\dots,i_n}(f_1,\dots,f_n)\big\|_{L^1}\lesssim 2^{-(i_1+\dots+i_n)\frac{d-1}{n}}\big\|f_1\big\|_{L^n}\cdots\big\|f_n\big\|_{L^n}.
\end{equation*}
Moreover, using interpolation, there exists $\delta=\delta(p_1,\dots,p_n,d)$ such that 
\begin{equation*}
\big\lVert\mathcal{A}^{i_1,\dots,i_n}_1(f_1,\dots,f_n)\big\rVert_{L^r}\lesssim 2^{-(i_1+\dots+i_n)\delta\frac{d-1}{n}}\big\|f_1\big\|_{L^{p_1}}\cdots\big\|f_n\big\|_{L^{p_n}}.    
 \end{equation*}   
 \end{pp}

\begin{proof} Denote $\psi_i(x)=2^{id}\psi(2^{i}x)$ and $f^i=f*\psi_{i}$. We have $\|\psi_i \|_{L^1}=\|\psi \|_{L^1}$ and $\|f^i\|_{L^p}=\|\psi_i*f\|_{L^p}\leq\|\psi_i\|_{L^1}\|f\|_{L^p}$ by Young's convolution inequality. 

Assume $i_1,\dots,i_n\geq 1$. One can see that 
\begin{equation*}
\big\lVert\mathcal A^{i_1,\dots,i_n}_1(f_1,\dots,f_n)\big\rVert_{L^r}=\big\lVert\mathcal A_1(\psi_{i_1}*f_1,\dots,\psi_{i_n}*f_n)\big\rVert_{L^r}=\big\lVert\mathcal A_1(f_1^{i_1},\dots,f_n^{i_n})\big\rVert_{L^r}.
\end{equation*}

Now, using the result of Theorem \ref{th1},  we have 
\begin{align*}
 \big\lVert\mathcal A^{i_1,\dots,i_n}_1(f_1,\dots,f_n)\big\rVert_{L^r}\ 
=&\,\,\big\lVert\mathcal A_1(\psi_{i_1}*f_1,\dots,\psi_{i_n}*f_n)\big\rVert_{L^r}\\
\lesssim&\,\,\big\lVert\psi_{i_1}*f_1\big\rVert_{L^{p_1}}\cdots \big\lVert\psi_{i_n}*f_n\big\|_{L^{p_n}}\\
\lesssim&\,\, \lVert f_1\rVert_{L^{p_1}}\cdots\lVert f_n\rVert_{L^{p_n}}.
\end{align*}

Let   $ \mathbb B^{(n-1)d}$ be the ball of radius $1$ in $\mathbb R^{(n-1)d}$.
By the slicing formula \eqref{eq:slicing-pairing} and polar coordinates on the ball $\mathbb B^{(n-1)d}(0,1)$, setting $\lambda=(\sum_{i=1}^{n-1} |y_i|^2)^{1/2} $, we have the following:
\begin{eqnarray*}
&&\hspace{-.4in}\mathcal A_t^n(f_1,\dots,f_n)(x)\\
&=&\int_{ \mathbb B^{(n-1)d}}\prod_{i=1}^{n-1} f_i(x-ty_i) \int_{\mathbb S^{d-1}}f_n\Big(x-
t \sqrt{1-\lambda^2}y_n\Big)\,d\sigma(y_n)\\
&&\hspace{2in}\big(1 - \lambda^2\big)^{\frac{d-2}{2}}dy_1\cdots dy_{n-1}\\
&=&\int_0^1 \lambda^{(n-1)d-1}(1-\lambda^2)^{\frac{d-2}{2}}\left\{\int_{\mathbb S^{(n-1)d-1}}\prod_{i=1}^{n-1} f_i(x-t\lambda y_i)\,d\sigma(y_1,\dots,y_{n-1})\right\}\\
&&\hspace{2in}\left\{\int_{\mathbb S^{d-1}}f_n\Big(x-t\sqrt{1-\lambda^2}\, y_n\Big)\,d\sigma(y_n)\right\} d\lambda\\
&=&\int_{0}^{1}\lambda^{(n-1)d-1}(1-\lambda^2)^{\frac{d-2}{2}}A^{n-1}_{t\lambda}(f_1,\dots,f_{n-1})(x)\,A_{t\sqrt{1-\lambda^2}}(f_n)(x)\,d\lambda. 
\end{eqnarray*}

Let $\lambda_1>0$. Applying the preceding identity replacing $t$ by $t\lambda_1$  we obtain
\begin{align*}
&\hspace{-.3in} \mathcal A_{t\lambda_1}^{n-1}(f_1,\dots,f_n)(x)\\
=&\int_{0}^{1}\lambda_2^{(n-2)d-1}(1-\lambda_2^2)^{\frac{d-2}{2}}A^{n-2}_{t\lambda_1\lambda_2}(f_1,\dots,f_{n-2})(x)\,A_{t\lambda_1\sqrt{1-\lambda_2^2}}(f_{n-1})(x)\,d\lambda_2.
\end{align*}

Continuing   inductively  we can write
\begin{eqnarray*}
&&\hspace{-.3in}\mathcal A_t^n(f_1,\dots,f_n)(x)\\
&=&\int_{0}^{1}\lambda_1^{(n-1)d-1}(1-\lambda_1^2)^{\frac{d-2}{2}}\mathcal A^{n-1}_{t\lambda_1}(f_1,\dots,f_{n-1})(x)\,\mathcal A_{t\sqrt{1-\lambda_1^2}}(f_n)(x)\,d\lambda_1\\
&\leq&\int_{0}^{1}\mathcal A^{n-1}_{t\lambda_1}(f_1,\dots,f_{n-1})(x)\,\mathcal A_{t\sqrt{1-\lambda_1^2}}(f_n)(x)\,d\lambda_1\\
&\leq&\int_{0}^{1}\int_{0}^{1}\mathcal A^{n-2}_{t\lambda_1\lambda_2}(f_1,\dots,f_{n-2})(x)\,\mathcal A_{t\lambda_1\sqrt{1-\lambda_2^2}}(f_{n-1})(x)\,d\lambda_2\,\mathcal A_{t\sqrt{1-\lambda_1^2}}(f_n)(x)\,d\lambda_1\\
&\leq&\int_{0}^{1}\cdots\int_{0}^{1}\mathcal A_{t\lambda_1\cdots\lambda_{n-2}\lambda_{n-1}}(f_1)(x)
\\
&&\hspace{1in}\mathcal A_{t\lambda_1\cdots\lambda_{n-2}\sqrt{1-\lambda_{n-1}^2}}(f_2)(x)\cdots\mathcal A_{t\sqrt{1-\lambda_1^2}}(f_n)(x)\,d\lambda_1\cdots d\lambda_{n-1}. 
\end{eqnarray*}

Thus, by the preceding estimate we only need to investigate the property of the spherical averaging operator $\mathcal A_\lambda$ for $\lambda>0$. The following was proved in \cite{BB2024}
and \cite{HHY2020}:
$$
\|\mathcal A^i_{\lambda}(f)\|_{L^2(\mathbb R^d)} =\|\widehat{f} \,\widehat{\psi}(2^{-i}\,\cdot\, )\widehat{\sigma_{d-1}}(\lambda \,\cdot\, )\|_{L^2(\mathbb R^d)}
\lesssim(\lambda 2^{i})^{-\frac{d-1}{2}}\|f\|_{L^2(\mathbb R^d)}\lesssim 2^{-i\frac{d-1}{2}}\|f\|_{L^2(\mathbb R^d)}.
$$
 
By interpolating this estimate with the trivial one $\|\mathcal A^i_{\lambda}(f)\|_{L^\infty}
\le  \|f\|_{L^\infty}$  we deduce 
\begin{equation}
\|\mathcal A^i_{\lambda}(f)\|_{L^n}
\lesssim 2^{-i\frac{d-1}{n}}\|f\|_{L^n}. 
\end{equation}
   
Therefore,   using Minkowski's inequality for integrals and H\"older's inequality  we obtain 
\begin{align*}
&\hspace{-.3in}\big\lVert\mathcal A_t(f_1,\dots,f_n)\big\rVert_{L^1}\\
\leq&\bigg\lVert\int_{0}^{1}\cdots\int_{0}^{1}\mathcal A_{t\lambda_1\cdots\lambda_{n-2}\lambda_{n-1}}(f_1)  \cdots\mathcal A_{t\sqrt{1-\lambda_1^2}}(f_n)\, d\lambda_1\cdots d\lambda_{n-1}\bigg\rVert_{L^1}\\
\leq&\int_{0}^{1}\cdots\int_{0}^{1}\big\lVert\mathcal A_{t\lambda_1\cdots\lambda_{n-2}\lambda_{n-1}}(f_1)\cdots\mathcal A_{t\sqrt{1-\lambda_1^2}}(f_n)\big\rVert_{L^1}\,d\lambda_1\cdots d\lambda_{n-1}\\
\leq&\int_{0}^{1}\cdots\int_{0}^{1}\big\lVert\mathcal A_{t\lambda_1\cdots\lambda_{n-2}\lambda_{n-1}}(f_1)\big\rVert_{L^n}\cdots\big\lVert\mathcal A_{t\sqrt{1-\lambda_1^2}}(f_n)\big\rVert_{L^n}\,d\lambda_1\cdots d\lambda_{n-1}\\
\lesssim & 2^{-(i_1+\dots+i_n)\frac{d-1}{n}}\|f_1\|_{L^n}\cdots\|f_n\|_{L^n}.
\end{align*}
This completes the proof. \end{proof}

\noindent\textbf{Remark.} In view of   Proposition~\ref{4.1}, we   conclude that 
$$C(i_1,\dots,i_n,p_1,\dots,p_n)\lesssim 2^{-(i_1+\dots+i_n)\delta\frac{d-1}{n}},$$
where $\delta=\delta(p_1,\dots,p_n,d)$.

\begin{pp}
For $d\geq 2$ one has that:
$$\mathcal M_{\mathrm{lac}}^B(f_1,\dots,f_n) 
:=\sup_{l\in \mathbb Z}\mathcal A_{2^{-l}}^B(f_1,\dots,f_n)\lesssim M(f_1)\cdots M(f_n),$$
where $M$ denotes the Hardy-Littlewood maximal operator on $\mathbb R^d$. 
Consequently, for $1< p_1,\dots,p_n\leq \infty$ and $0< r\leq\infty$ such that $\frac{1}{r}=\frac{1}{p_1}+\cdots+\frac{1}{p_n}$, we have
$$
\bigg\lVert \sum_{B\in \mathcal B}\mathcal M^B_{\mathrm{lac}}\bigg\rVert_{L^{p_1}\times \cdots\times L^{p_n}\rightarrow L^r}<\infty.
$$
\end{pp}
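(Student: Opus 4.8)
The plan is to establish the pointwise estimate $\mathscr M_{lac}^B(f_1,\dots,f_n)(x)\lesssim M(f_1)(x)\cdots M(f_n)(x)$ first, and then read off the operator bound as a formal consequence. For that consequence, once the pointwise bound is available, H\"older's inequality combined with the boundedness of the Hardy--Littlewood maximal operator $M$ on $L^{p_k}$ for each $p_k>1$ gives $\lVert\mathscr M_{lac}^B(f_1,\dots,f_n)\rVert_{L^r}\lesssim\prod_k\lVert M f_k\rVert_{L^{p_k}}\lesssim\prod_k\lVert f_k\rVert_{L^{p_k}}$; since $\mathcal B$ is finite (at most $2^n$ elements) and $I$ is a finite index set, summing finitely many such norms keeps everything finite. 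So the entire statement reduces to the pointwise inequality.

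To set up the pointwise bound I would first record that $\mathcal A_t^B$ is the full spherical average applied to frequency-filtered inputs: writing $\Lambda_k$ for the multiplier operator with symbol $\widehat\varphi(\xi_k)$ when $k\in\{k_1,\dots,k_m\}$ (a low-pass, i.e.\ convolution with the Schwartz function $\varphi$) and with symbol $1-\widehat\varphi(\xi_k)$ otherwise (a high-pass, i.e.\ convolution with $\delta-\varphi$), one has $\mathcal A_t^B(f_1,\dots,f_n)=\mathcal A_t^n(\Lambda_1 f_1,\dots,\Lambda_n f_n)$. I would then use the slicing identity (exactly as in the proof of Proposition~\ref{4.1}, but peeling off only a single $\mathbb S^{d-1}$) to split a \emph{solid} average from a genuinely spherical one: slicing $\mathbb S^{nd-1}$ so that $(y_1,\dots,y_{n-1})$ ranges over the ball $\mathbb B^{(n-1)d}$ and the last variable over the core sphere $\mathbb S^{d-1}$, one writes $\mathcal A_t^B$ as a ball integral in $(y_1,\dots,y_{n-1})$ of $\prod_{k<n}(\Lambda_k f_k)(x-ty_k)$, times the $\mathbb S^{d-1}$-average of the distinguished factor at the coupled scale $t\sqrt{1-|Y|^2}$, against the bounded weight $(1-|Y|^2)^{(d-2)/2}$. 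Crucially I would take the distinguished factor to be a \emph{low-frequency} one, which is possible since $m\ge 1$.

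The factors sitting inside the solid ball integral are, morally, approximate identities: the weighted ball integral is convolution against a bounded, compactly supported kernel dilated to scale $t$, which has an $L^1$ radially decreasing majorant, so the standard approximate-identity maximal estimate controls their lacunary supremum by $M(f_k)$, and this works for both low- and high-pass filters because $\varphi$ and $\delta-\varphi$ have integrable convolution kernels with $M(\Lambda_k f_k)\lesssim M(f_k)$. The hard part is the distinguished factor. The plan there is to exploit the frequency localization $|\xi_{k_1}|\le 2$ of $\Lambda_{k_1}f_{k_1}$ to trade the singular spherical average for a smooth solid one: recombining the $Y$-ball integration with the core-sphere integration should reconstitute an honest average of $\Lambda_{k_1}f_{k_1}$ over a full ball $\mathbb B^{d}$ in its own variable, again an admissible approximate identity and hence dominated by $M(f_{k_1})$.

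I expect this last step to be the main obstacle, and the precise place where the low-frequency localization is genuinely needed. One cannot simply decouple the distinguished spherical factor and estimate it in isolation: for a merely low-pass function the lacunary spherical maximal average is \emph{not} pointwise dominated by $M f$ at large scales $t\gg 1$ (concentrating the mass of $f$ at distance $R$ from $x$ and taking the dyadic scale $t\approx R$ already forces a ratio that grows in $R$), so the coupling between the sphere and the ball variables must be retained throughout. The delicate point is therefore to show that, after recombination, the kernel of $\sup_l\mathcal A^B_{2^{-l}}$ is majorized by a tensor product of admissible radially decreasing approximate identities in all $n$ variables simultaneously, with the low-frequency cutoff supplying exactly the smoothness required to absorb the core $\mathbb S^{d-1}$ into a solid $\mathbb B^d$ average. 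Once this kernel domination is in place, the pointwise bound $\mathscr M_{lac}^B\lesssim\prod_k M(f_k)$ follows, and with it the operator-norm estimate.
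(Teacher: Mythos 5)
Your reduction of the operator-norm statement to the pointwise bound (H\"older plus $L^{p_k}$-boundedness of $M$ for $p_k>1$), the identification $\mathcal A_t^B(f_1,\dots,f_n)=\mathcal A_t^n(\Lambda_1f_1,\dots,\Lambda_nf_n)$, and your treatment of the solid-ball factors are all sound and consistent with the paper. The genuine gap is exactly the step you yourself flag as ``the main obstacle'': the distinguished low-pass factor on the core sphere. Your write-up never proves the required kernel domination; it ends by asserting that recombining the ball and sphere integrations ``should reconstitute'' a solid average, which is precisely the heart of the proposition, left as a hope rather than an argument. Your diagnosis of the difficulty is also off. Inside the sliced integral the spherical radius is $t\sqrt{1-\lvert Y\rvert^2}\le t$, and the low-pass cutoff in the lacunary decomposition is scale-adapted --- this is what makes the dilation identity $\mathcal A^B_{2^{-l}}(f_1,\dots,f_n)(x)=\mathcal A^B_1(D_{2^{-l}}f_1,\dots,D_{2^{-l}}f_n)(2^lx)$ used in the paper true, and it is how the pieces are actually used in the proof of Theorem 3 ($f_k*\psi_{i_k+l}$, not $f_k*\psi_{i_k}$). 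Hence the mollification scale is always at least the spherical radius, the composed kernel satisfies $\sigma_s*\varphi_t(y)\lesssim_N t^{-d}(1+\lvert y\rvert/t)^{-N}$ uniformly in $s\le t$, and the spherical factor \emph{does} decouple and is bounded by $Mf_{k_1}(x)$ alone; the large-scale counterexample you invoke (radius far exceeding the mollification scale) never occurs. So the gap is fillable, but this uniform kernel estimate --- the one thing that makes your route work --- is missing.

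The paper avoids the difficulty entirely by making the opposite assignment of variables. Since $(f_i*\varphi)(x-y_i)\lesssim Mf_i(x)$ for every $\lvert y_i\rvert\le 1$ (a pointwise bound valid at each point of the sphere, requiring no averaging), all $m\ge 1$ low-pass factors are pulled out of the spherical integral \emph{before} any slicing. What remains under the integral is the product of high-pass factors alone; slicing then sends those $n-m$ variables into the solid ball $\mathbb B^{(n-m)d}(0,1)$, where each factor is a ball average $\lesssim Mf_j(x)$, while the core sphere $\mathbb S^{md-1}$ carries only the constant function and the bounded weight $(1-\sum\lvert y_j\rvert^2)^{\frac{d-2}{2}}$, contributing a harmless constant. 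In other words, the paper arranges matters so that no function is ever averaged over the core sphere, whereas you put a low-pass function there and must then do the hardest work of the proof to tame a genuinely spherical average.
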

\begin{proof}

 Let us check the inequality for $\mathcal{M}_{\mathrm{lac}}^B$ where $B\in \mathcal B$. Without loss of generality we assume $B=(0,\dots,0,\infty,\dots,\infty)$, where there are $m$ zeros in this vector, 
 for some $1\le m\le n$.   For $x\in \mathbb R^d$ we have 
\begin{align*}
\mathcal A_t^B(f_1,\dots,f_n)(x)
&=\int_{\mathbb R^{nd}}
\Big(\prod_{i=1}^n \widehat{f_i}(\xi_i)\Big)\,
\widehat{\sigma_{nd-1}}(t\xi_1,\dots,t\xi_n)\,
\Big(\prod_{k\in I_B}\widehat{\varphi}(\xi_k)\Big)\,
e^{2\pi i x\cdot\sum_{i=1}^n\xi_i}\,d\xi\\
&=\int_{\S^{nd-1}}\int_{\mathbb R^{nd}}
\prod_{i=1}^n\Big(\widehat{f_i}(\xi_i)\,e^{2\pi i(x-ty_i)\cdot\xi_i}\Big)\,
\Big(\prod_{k\in I_B}\widehat{\varphi}(\xi_k)\Big)\,d\xi\,d\sigma_{nd-1}(y)\\
&=\int_{\S^{nd-1}}
\Big(\prod_{k\in I_B}\underbrace{\int_{\mathbb R^d}\widehat{f_k}(\xi_k)\widehat{\varphi}(\xi_k)e^{2\pi i(x-ty_k)\cdot\xi_k}\,d\xi_k}_{(f_k*\varphi)(x-ty_k)}\Big)\\
&\hspace{2in}\cdot\Big(\prod_{j\notin I_B}\underbrace{\int_{\mathbb R^d}\widehat{f_j}(\xi_j)e^{2\pi i(x-ty_j)\cdot\xi_j}\,d\xi_j}_{f_j(x-ty_j)}\Big)
\,d\sigma_{nd-1}(y)
\\
&=\int_{\S^{nd-1}}
\Big(\prod_{k\in I_B}(f_k*\varphi)(x-ty_k)\Big)\,
\Big(\prod_{j\notin I_B}f_j(x-ty_j)\Big)\,d\sigma_{nd-1}(y).
\end{align*}

Since $(f*\varphi)(x-y)\lesssim Mf(x)$ for all $|y|\leq 1$, for nonnegative $f_i$ ($i=1,\dots,n$) we have the pointwise bound
 \begin{align*}
 \Big\lvert\mathcal{A}_1^B(f_1,\dots,f_n)(x)\Big\rvert 
\leq&\int_{\mathbb S^{nd-1}}\prod_{i=1}^m\lvert f_i*\varphi(x-y_i)\rvert\cdot\prod_{i=m+1}^n\lvert f_{i}(x-y_{i})\rvert \,d\sigma_{nd-1}(y_1,\dots,y_n)\\
\lesssim&\,M(f_1)(x)\cdots M(f_m)(x)\cdot\int_{\mathbb S^{nd-1}} \prod_{i=m+1}^n f_i(x-y_i) \,d\sigma_{nd-1}(y_1,\dots,y_n). 
\end{align*}
Therefore,
\begin{align*}
&\hspace{-.3in}\sup_{l\in \mathbb Z}\Big|\mathcal A_{2^{-l}}^B(f_1,\dots,f_n)(x)\Big|\\
=&\sup_{l\in \mathbb Z}\Big|\mathcal A_{1}^B(D_{2^{-l}}f_1,\dots,D_{2^{-l}}f_n)(2^l x)\Big|\\
\lesssim& \sup_{l\in\mathbb Z} \prod_{j=1}^m M(D_{2^{-l}}f_j)(2^l x)\cdot \int_{\mathbb S^{nd-1}}\prod_{j=m+1}^nD_{2^{-l}}f_j(2^l x-y_j)\,d\sigma_{nd-1}(y_1,\dots,y_n)\\
\lesssim&\prod_{j=1}^m M(f_j)(x)\cdot\sup_{l\in\mathbb Z} \int_{\mathbb S^{nd-1}}\prod_{j=m+1}^n f_j(x-2^{-l}y_j)\,d\sigma_{nd-1}(y_1,\dots,y_n).
\end{align*}

Observe that, if $m<n$, by the slicing formula \eqref{eq:slicing-pairing},
\begin{align*}
&\hspace{-.2in}\int\limits_{\mathbb S^{nd-1}}\prod_{j=m+1}^n f_j(x-2^{-l}y_j)\,d\sigma_{nd-1}(y_1,\dots,y_n)\\
=&\int\limits_{ \mathbb B^{(n-m)d}(0,1)}\prod_{j=m+1}^n f_j(x-2^{-l}y_j)\int\limits_{\mathbb S^{md-1}}\bigg(1-\sum_{j=1}^m|y_j|^2\bigg)^{\frac{d-2}{2}}\,d\sigma(y_1,\dots,y_m)\,dy_{m+1}\cdots dy_n\\
\lesssim&\int\limits_{ \mathbb B^{(n-m)d}(0,1)}\prod_{j=m+1}^n f_j(x-2^{-l}y_j)\,dy_{m+1}\cdots dy_n\\
\lesssim&\prod_{j=m+1}^n\int_{ \mathbb B^d(0,1)} f_j(x-2^{-l}y_j)\,dy_j\\
\lesssim&\prod_{j=m+1}^n M(f_j)(x). 
\end{align*}
Thus, we have
$$\mathcal M_{\mathrm{lac}}^B(f_1,\dots,f_n) 
\lesssim M(f_1)\cdots M(f_n) 
$$
and this completes the proof. \end{proof}

Having established these preliminary ingredients, we start the proof of Theorem~\ref{th3}.
\begin{proof}[Proof of Theorem~4] 
Let $\mathcal{N}$ be a positive integer. Define 
$$
\mathbb M^n_{\mathcal{N}}(f_1,\dots,f_n)(x)=\sup_{|l|\leq \mathcal{N}}|\mathcal A^n_{2^{-l}}(f_1,\dots,f_n)(x)| , \qquad x\in \mathbb R^d.
$$
We recall that since 
\begin{equation}\label{norm}
\big\|\mathcal{A}^{i_1,\dots,i_n}_{2^{-l}}\big\|_{L^{p_1}\times \cdots\times L^{p_n}\rightarrow L^r}=\big\|\mathcal{A}^{i_1,\dots,i_n}_1\big\|_{L^{p_1}\times \cdots\times L^{p_n}\rightarrow L^r}:=C(i_1,\dots,i_n,p_1,\dots,p_n),
\end{equation}
 it is clear that for all $\mathcal{N}\in \mathbb N$, define
$$A_{\mathcal{N}}(p_1,\dots,p_n,r):=\big\|\mathbb M^n_{\mathcal{N}}(\,\cdot\,)\big\|_{L^{p_1}\times \cdots\times L^{p_n}\rightarrow L^r}<\infty.$$
Since $\mathbb M^n_{\mathcal{N}} \uparrow \mathcal M^n_{\mathrm{lac}}$,  by the monotone convergence theorem, it will suffice to show that $A_{\mathcal{N}}(p_1,\dots,p_n,r)$ is bounded by a constant independent of $\mathcal{N}$.

Define the vector-valued operator for $i_1,\dots,i_n\geq 1$
\begin{equation*}
\mathbb{M}_{\mathcal{N}}^{i_1,\dots,i_n}:\Big(\{f_{1,l}\}_{|l|\leq \mathcal{N}}, \dots,\{f_{n,l}\}_{|l|\leq \mathcal{N}}\Big)\longmapsto \Big\{\mathcal{A}_{2^{-l}}\big(f_{1,l}*\psi_{i_1+l},\dots,f_{n,l}*\psi_{i_n+l}\big)(x)\Big\}_{|l|\leq \mathcal{N}}.
\end{equation*}
By the definition of $A_{\mathcal{N}}(p_1,\dots,p_n,r)$ and an adaptation of the calculation in \cite{HHY2020}, for $i_1,\dots,i_n \geq1$,   $p_1,\dots,p_n >1$, and $\frac{1}{r}=\frac{1}{p_1}+\cdots+\frac{1}{p_n}$  we have
\begin{align}
\begin{split}\label{bound1}
&\hspace{-.4in}\Big\|\mathbb{M}_{\mathcal{N}}^{i_1,\dots,i_n}\Big(\{f_{1,l}\}_{|l|\leq \mathcal{N}}, \dots,\{f_{n,l}\}_{|l|\leq \mathcal{N}}\Big)\Big\|_{L^r(l^\infty)}\\
=&\Big\|\sup_{|l|\leq \mathcal{N}}\mathcal{A}_{2^{-l}}\big(f_{1,l}*\psi_{i_1+l},\dots,f_{n,l}*\psi_{i_n+l}\big)\Big\|_{L^r}\\
\lesssim&\Big\|\sup_{|l|\leq \mathcal{N}}\mathcal{A}_{2^{-l}}(M(f_{1,l}),\dots, M(f_{n,l}))\Big\|_{L^r}\\
\lesssim&\Big\|\mathcal{A}_{2^{-l}}\big(M(\sup_{|l|\leq \mathcal{N}}f_{1,l}),\dots, M(\sup_{|l|\leq \mathcal{N}}f_{n,l})\big)\Big\|_{L^r}\\
\leq&A_{\mathcal{N}}(p_1,\dots,p_n,r)\|M(\sup_{|l|\leq \mathcal{N}}f_{1,l})\|_{L^{p_1}}\cdots \|M(\sup_{|l|\leq \mathcal{N}}f_{n,l})\|_{L^{p_n}}\\
\leq&A_{\mathcal{N}}(p_1,\dots,p_n,r)\|\sup_{|l|\leq \mathcal{N}}f_{1,l}\|_{L^{p_1}}\cdots \|\sup_{|l|\leq \mathcal{N}}f_{n,l}\|_{L^{p_n}}\\
=&A_{\mathcal{N}}(p_1,\dots,p_n,r)\|\{f_{1,l}\}_{|l|\leq \mathcal{N}}\|_{L^{p_1}(l^\infty)}\cdots \|\{f_{n,l}\}_{|l|\leq \mathcal{N}}\|_{L^{p_n}(l^\infty)}. 
\end{split}
\end{align}

Then for $p_1,\dots,p_n >1$ and $\frac{1}{r}=\frac{1}{p_1}+\cdots+\frac{1}{p_n}$, applying \eqref{norm} and H\"older's inequality  we obtain
\begin{align}
\begin{split}\label{bound2}
&\hspace{-.4in}\Big\|\mathbb{M}_{\mathcal{N}}^{i_1,\dots,i_n}\Big(\{f_{1,l}\}_{|l|\leq \mathcal{N}}, \dots,\{f_{n,l}\}_{|l|\leq \mathcal{N}}\Big)\Big\|_{L^r(l^r)}\\
=&\Big\|\big\{\mathcal{A}_{2^{-l}}\big(f_{1,l}*\psi_{i_1+l},\dots,f_{n,l}*\psi_{i_n+l}\big)\big\}_{|l|\leq \mathcal{N}}\Big\|_{L^r(l^r)}\\
\leq&C(i_1,\dots,i_n,p_1,\dots,p_n)\Bigl(\sum_{|l|\leq\mathcal{N}}\|f_{1,l}\|^r_{L^{p_1}}\cdots\|f_{n,l}\|^r_{L^{p_n}}\Bigr)^{\frac{1}{r}}\\
\leq&C(i_1,\dots,i_n,p_1,\dots,p_n)\prod_{j=1}^n \Bigl(\sum_{|l|\leq\mathcal{N}}\|f_{j,l}\|^{p_j}_{L^{p_j} }\Bigr)^{1/p_j} \\
=&C(i_1,\dots,i_n,p_1,\dots,p_n)\prod_{j=1}^n\Big\|\{f_{j,l}\}_{|l|\leq\mathcal{N}}\Big\|_{L^{p_j}(l^{p_j})}\\
\leq&C(i_1,\dots,i_n,p_1,\dots,p_n)\prod_{j=1}^n\Big\|\{f_{j,l}\}_{|l|\leq\mathcal{N}}\Big\|_{L^{p_j}(l^{1})}.
\end{split}
\end{align}

Interpolating between    the bounds in \eqref{bound1} and \eqref{bound2} (\cite[Exercise 1.3.5]{FFA})  implies that for any 
$p_1,\dots,p_n\in (1,\infty)$, $\frac{1}{r}=\frac{1}{p_1}+\cdots+\frac{1}{p_n}$ 
one has
\begin{equation}\label{bound}
\begin{split}
&\hspace{-.4in} \Big\|\mathbb{M}_{\mathcal{N}}^{i_1,\dots,i_n} \big(\{f_{1,l}\}_{{|l|}\leq \mathcal{N}},\dots , \{f_{n,l}\}_{|l|\leq \mathcal N}\big)\Big\|_{L^r(l^{2r})}\\
\leq &A_{\mathcal{N}}(p_1,\dots,p_n,r)^{\frac{1}{2}}\,C(i_1,\dots,i_n,p_1,\dots,p_n)^{\frac{1}{2}}\prod_{j=1}^n\Big\|\{f_{j,l}\}_{|l|\leq\mathcal{N}}\Big\|_{L^{p_j}(l^{2})}.
\end{split}
\end{equation}

 Using bound \eqref{bound} and the Littlewood-Paley theorem \cite[Theorem 6.1.2]{GrafakosCFA}, we obtain that for any $p_1,\dots,p_n>1$
\begin{eqnarray}
&&\hspace{-.6in}\Big\|\sup_{|l|\leq \mathcal{N}} \big\lvert\mathcal{A}_{2^{-l}}^{i_1,\dots,i_n}(f_1,\dots,f_n)\big\rvert\Big\|_{L^{r}}
\notag \\
&=& \Big\|\sup_{|l|\leq \mathcal{N}} \big\lvert\mathcal{A}_{2^{-l}}(f_1*\psi_{i_1+l},\dots,f_n*\psi_{i_n+l})\big\rvert\Big\|_{L^r}\notag  \\
&=&\Big\|\mathbb{M}_{\mathcal{N}}^{i_1,\dots,i_n}\Big(\{f_1*\psi_{i_1+l}\}_{|l|\leq \mathcal{N}},\dots,\{f_n*\psi_{i_n+l}\}_{|l|\leq \mathcal N}\Big)\Big\|_{L^r(l^\infty)}\notag  \\
&\lesssim&\Big\|\mathbb{M}_{\mathcal{N}}^{i_1,\dots,i_n}\Big(\{f_1*\psi_{i_1+l}\}_{|l|\leq \mathcal{N}},\dots,\{f_n*\psi_{i_n+l}\}_{|l|\leq \mathcal N}\Big)\Big\|_{L^r(l^{2r})} \notag \\
&\leq&A_{\mathcal{N}}(p_1,\dots,p_n,r)^{\frac{1}{2}}C(i_1,\dots,i_n,p_1,\dots,p_n)^{\frac{1}{2}}\prod_{k=1}^n\big\|\{f_k*\psi_{i_k+l}\}_{|l|\leq\mathcal{N}}\big\|_{L^{p_k}(l^{2})}\notag \\
&\lesssim&A_{\mathcal{N}}(p_1,\dots,p_n,r)^{\frac{1}{2}}C(i_1,\dots,i_n,p_1,\dots,p_n)^{\frac{1}{2}}\|f_1\|_{L^{p_1}}\cdots\|f_n\|_{L^{p_n}}.
\notag
\end{eqnarray}

To finish the proof for the interior points $p_1,\dots,p_n\in (1,\infty)$, we use the fact that $\|\cdot\|_{L^r}^{\min(1,r)}$ is a sub-additive quantity.
\begin{eqnarray*}
&&\hspace{-.3in}A_{\mathcal{N}}(p_1,\dots,p_n,r)^{\min(1,r)}\\
&=&\Big\|\sup_{|l|\leq \mathcal{N}}|\mathcal{A}_{2^{-l}}^{i_1,\dots,i_n}|\Big\|^{\min(1,r)}_{L^{p_1}\times\cdots\times L^{p_n}\rightarrow L^r}\\
&\leq&\Big\|
 \sum_{B\in \mathcal B}\mathcal M^B_{\mathrm{lac}}
\Big\|_{L^{p_1}\times\cdots\times L^{p_n}\rightarrow L^r}^{\min(1,r)}+\Big\|\sum_{i_1,\dots,i_n\geq 1} 
\sup_{|l|\leq \mathcal{N}}|\mathcal{A}^{i_1,\dots,i_n}_{2^{-l}}|\Big\|_{L^{p_1}\times \cdots\times L^{p_n}\rightarrow L^r}^{\min(1,r)}\\
 &\lesssim & 1+\sum_{i_1,\dots,i_n\geq 1} \Big\|\sup_{|l|\leq \mathcal{N}}|\mathcal{A}^{i_1,\dots,i_n}_{2^{-l}}|\Big\|_{L^{p_1}\times \cdots\times L^{p_n}\rightarrow L^r}^{\min(1,r)}\\
&\lesssim & 1+A_{\mathcal{N}}(p_1,\dots,p_n,r)^{\min(\frac{1}{2},\frac{r}{2})}\sum_{i_1,\dots,i_n\geq 1}C(i_1,\dots,i_n,p_1,\dots,p_n)^{\min(\frac{1}{2},\frac{r}{2})}.
\end{eqnarray*}

Hence, we deduce
\begin{equation}
 A_{\mathcal{N}}(p_1,\dots,p_n,r)\lesssim 1 +\left\{\sum_{i_1,\dots,i_n\geq 1} C(i_1,\dots,i_n,p_1,\dots,p_n)^{r/2}\right\}^{2/r},\quad r<1, 
\end{equation}
and
\begin{equation}
 A_{\mathcal{N}}(p_1,\dots,p_n)\lesssim 1 +\left\{\sum_{i_1,\dots,i_n\geq 1}C(i_1,\dots,i_n,p_1,\dots,p_n)^{1/2}\right\}^2,\quad r\geq 1.
\end{equation}

By Proposition \ref{4.1} one has 
$$\sum_{i_1,\dots ,i_n\geq 1} C(i_1,\dots,i_n,p_1,\dots,p_n)^{r/2}\lesssim \sum_{i_1,\dots ,i_n\geq 1} 2^{-(i_1+\cdots+i_n)\delta r/n}\lesssim 1.$$

Therefore $\sup_{\mathcal{N}\in \mathbb N} A_{\mathcal{N}}(p_1,\dots,p_n,r)\leq C$ 
and this completes the proof.
\end{proof}

\section{Visualization}

In this section, we provide pictures of the regions of 
boundedness for $\mathcal{M}^2$,   $\mathcal M_{\mathrm{lac}}^2$, and $\mathcal A^2$, as well as for 
$\mathcal{M}^3$, and $\mathcal M_{\mathrm{lac}}^3$, and $\mathcal A^3$. The region of 
boundedness is shown as the set of all $ (\frac 1{p_1}, \dots, \frac1{p_n})$ for which 
these operators are bounded from $L^{p_1}(\mathbb R^d) \times \cdots \times L^{p_n}(\mathbb R^d) \rightarrow L^r(\mathbb R^d)$.

We use blue color to indicate the region where the strong bounds $L^{p_1}\times\cdots\times L^{p_n}\rightarrow L^r$ hold and we use red color to represent the region where the strong bounds fail or are unknown.

\begin{center}
\begin{minipage}{.2\textwidth}
\begin{tikzpicture}
\draw[->,line width=0.5pt] (-0.2,0)--(3.5,0);
\draw[->,line width=0.5pt] (0,-0.2)--(0,3.5);

\filldraw [white,fill=blue!20] (0,0)--(3,0)--(3,2)--(2,3)--(0,3)--(0,0);
\filldraw [white,fill=red!20] (3,2)--(2,3)--(3,3);

\draw[-,line width=1pt] (-0.1,2)--(0.1,2);
\draw[-,line width=1pt] (2,-0.1)--(2,0.1);

\draw (0,4) node {$\frac{1}{p_2}$};
\draw (-0.3,3) node {$1$};
\draw (-0.5,2) node{$\frac{d-1}{d}$};
\draw (3.8,0) node {$\frac{1}{p_1}$};
\draw (3,-0.3) node {$1$};
\draw (2,-0.5) node{$\frac{d-1}{d}$};

\draw[line width=1pt,blue,line width=1pt] (0,0)--(3,0)--(3,2);
\draw[line width=1pt,blue,line width=1pt] (2,3)--(0,3)--(0,0);
\draw[dashed,red,line width=1pt] (2,3)--(3,2);
\draw[dashed,red,line width=1pt] (2,3)--(3,3);
\draw[dashed,red,line width=1pt] (3,2)--(3,3);

\fill[red] (3,0) circle (2pt);
\fill[red] (0,3) circle (2pt);
\fill[red] (3,2) circle (2pt);
\fill[red] (2,3) circle (2pt);
\fill[red] (3,2) circle (2pt);
\fill[red] (2,3) circle (2pt);
\fill[red] (3,3) circle (2pt);
\fill[blue] (0,0) circle (2pt);

\node at (1.9,-1,1)     {\textup{Fig 1.\,} \textit{Region for $\mathcal M^2$}};

\end{tikzpicture}
\end{minipage}
\hspace{1.5cm}
\begin{minipage}{.2\textwidth}
\begin{tikzpicture}
\draw[->,line width=0.5pt] (-0.2,0)--(3.5,0);
\draw[->,line width=0.5pt] (0,-0.2)--(0,3.5);

\filldraw [white,fill=blue!20] (0,0)--(3,0)--(3,3)--(3,3)--(0,3)--(0,0);

\draw[-,line width=1pt] (-0.1,2)--(0.1,2);
\draw[-,line width=1pt] (2,-0.1)--(2,0.1);

\draw (0,4) node {$\frac{1}{p_2}$};
\draw (-0.3,3) node {$1$};
\draw (-0.5,2) node{$\frac{d-1}{d}$};
\draw (3.8,0) node {$\frac{1}{p_1}$};
\draw (3,-0.3) node {$1$};
\draw (2,-0.5) node{$\frac{d-1}{d}$};

\draw[blue,line width=1pt] (0,0)--(3,0)--(3,2);
\draw[blue,line width=1pt] (2,3)--(0,3)--(0,0);
\draw[dashed,red,line width=1pt] (2,3)--(3,3);
\draw[dashed,red,line width=1pt] (3,2)--(3,3);

\fill[red] (3,0) circle (2pt);
\fill[red] (0,3) circle (2pt);
\fill[red] (3,2) circle (2pt);
\fill[red] (2,3) circle (2pt);
\fill[blue] (0,0) circle (2pt);
\fill[red] (3,2) circle (2pt);
\fill[red] (2,3) circle (2pt);
\fill[red] (3,3) circle (2pt);

\node at (1.9,-1,1) {\textup{Fig 2.\,} \textit{Region for $\mathcal M_{\mathrm{lac}}^2$}};
\end{tikzpicture}
\end{minipage}
\hspace{2cm}
\begin{minipage}{.2\textwidth}
\begin{tikzpicture}
\draw[->,line width=0.5pt] (-0.2,0)--(3.5,0);
\draw[->,line width=0.5pt] (0,-0.2)--(0,3.5);

\filldraw [white,fill=blue!20] (0,0)--(3,0)--(3,3)--(3,3)--(0,3)--(0,0);

\draw (0,4) node {$\frac{1}{p_2}$};
\draw (-0.3,3) node {$1$};
\draw (3.8,0) node {$\frac{1}{p_1}$};
\draw (3,-0.3) node {$1$};

\draw[line width=1pt,blue,line width=1pt] (0,0)--(3,0)--(3,3);
\draw[line width=1pt,blue,line width=1pt] (3,3)--(0,3)--(0,0);

\fill[blue] (3,0) circle (2pt);
\fill[blue] (0,3) circle (2pt);
\fill[blue] (3,3) circle (2pt);
\fill[blue] (0,0) circle (2pt);

\node at (1.9,-1,1) {\textup{Fig 3.\,} \textit{Region for $\mathcal A^2$}};
\end{tikzpicture}
\end{minipage}
\end{center}

In  Figures 1--3, we displayed the regions for which $\mathcal{M}^2$,   $\mathcal M_{\mathrm{lac}}^2$, and $\mathcal A^2$ are bounded from 
$L^{p_1}(\mathbb R^d) \times L^{p_2}(\mathbb R^d)$ to $L^{r}(\mathbb R^d)$.

\begin{center}
\begin{minipage}{.2\textwidth}
\hspace*{-2cm} 
\begin{tikzpicture}[scale=0.9][fill opacity=0.5]

\coordinate (O) at (0,0,0);
\coordinate (1) at (3,0,0);
\coordinate (2) at (0,3,0);
\coordinate (3) at (0,0,3);
\coordinate (A) at (3,3,0);
\coordinate (B) at (3,0,3);
\coordinate (C) at (0,3,3);
\coordinate (T) at (3,3,3);
\coordinate (a) at (2,3,3);
\coordinate (b) at (3,2,3);
\coordinate (c) at (3,3,2);
\coordinate (X) at (3.5,0,0);
\coordinate (Y) at (0,3.5,0);
\coordinate (Z) at (0,0,3.5);

\draw[->,line width=0.5pt] (O)--(X);
\draw[->,line width=0.5pt] (O)--(Y);
\draw[->,line width=0.5pt] (O)--(Z);

\fill[blue,opacity=1] (O) circle (2pt);

\filldraw [white,fill=blue!20] (O) -- (1) -- (A) -- (2) --cycle;
\filldraw [white,fill=blue!20] (O) -- (1) -- (B) -- (3) --cycle;
\filldraw [white,fill=blue!20] (O) -- (3) -- (C) -- (2) --cycle;
\filldraw [white,fill=blue!20] (b) -- (c) -- (A) -- (1) -- (B) --cycle;
\filldraw [white,fill=blue!20] (c) -- (a) -- (C) -- (2) -- (A) --cycle;
\filldraw [white,fill=blue!20] (a) -- (b) -- (B) -- (3) -- (C) --cycle; 

\filldraw[white,fill=red!20] (T) -- (b) -- (c) --cycle;
\filldraw[white,fill=red!20] (a) -- (T) -- (c) --cycle;
\filldraw[white,fill=red!20] (a) -- (b) -- (T) --cycle;
\filldraw[white,fill=red!20] (a) -- (b) -- (c) --cycle;

\draw [blue,line width=1pt,opacity=1] (O) -- (1) -- (A) -- (2) --cycle;
\draw [blue,line width=1pt,opacity=1] (O) -- (1) -- (B) -- (3) --cycle;
\draw [blue,line width=1pt,opacity=1] (O) -- (3) -- (C) -- (2) --cycle;
\draw [blue,line width=1pt,opacity=1] (c) -- (A) -- (1) -- (B) -- (b);
\draw [blue,line width=1pt,opacity=1] (a) -- (C) -- (2) -- (A) -- (c);
\draw [blue,line width=1pt,opacity=1] (b) -- (B) -- (3) -- (C) -- (a); 
\draw [dashed,red,line width=1pt,opacity=1] (c) -- (b);
\draw [dashed,red,line width=1pt,opacity=1] (a) -- (c);
\draw [dashed,red,line width=1pt,opacity=1] (b) -- (a); 
\draw [dashed,red,line width=1pt,opacity=1] (T) -- (b);
\draw [dashed,red,line width=1pt,opacity=1] (T) -- (c);
\draw [dashed,red,line width=1pt,opacity=1] (T) -- (a); 

\fill[red,opacity=1] (1) circle (2pt);
\fill[red,opacity=1] (2) circle (2pt);
\fill[red,opacity=1] (3) circle (2pt);
\fill[red,opacity=1] (A) circle (2pt);
\fill[red,opacity=1] (B) circle (2pt);
\fill[red,opacity=1] (C) circle (2pt);
\fill[red,opacity=1] (a) circle (2pt);
\fill[red,opacity=1] (b) circle (2pt);
\fill[red,opacity=1] (c) circle (2pt);
\fill[red,opacity=1] (T) circle (2pt);

\draw [opacity=1] (4,0,0) node {$\frac{1}{p_1}$};
\draw [opacity=1] (0,4,0) node {$\frac{1}{p_2}$};
\draw [opacity=1] (0,0,4) node {$\frac{1}{p_3}$};
\draw [opacity=1] (3,-0.3,0) node {$1$};
\draw [opacity=1] (-0.3,3,0) node {$1$};
\draw [opacity=1] (0.1,-0.3,3) node {$1$};
\draw [opacity=1] (2,3.4,3) node {$\frac{d-1}{d}$};
\draw [opacity=1] (2.5,2,3) node {$\frac{d-1}{d}$};
\draw [opacity=1] (3,3.4,2) node {$\frac{d-1}{d}$};

\node [opacity=1] at(3.5,0,7) {\textup{Fig 4.\,} \textit{Region for $\mathcal M^3$}};

\end{tikzpicture}
\end{minipage}
\begin{minipage}{.2\textwidth}
\begin{tikzpicture}[scale=0.9][fill opacity=0.5]

\coordinate (O) at (0,0,0);
\coordinate (1) at (3,0,0);
\coordinate (2) at (0,3,0);
\coordinate (3) at (0,0,3);
\coordinate (A) at (3,3,0);
\coordinate (B) at (3,0,3);
\coordinate (C) at (0,3,3);
\coordinate (T) at (3,3,3);
\coordinate (a) at (2,3,3);
\coordinate (b) at (3,2,3);
\coordinate (c) at (3,3,2);
\coordinate (X) at (3.5,0,0);
\coordinate (Y) at (0,3.5,0);
\coordinate (Z) at (0,0,3.5);

\draw[->,line width=0.5pt] (O)--(X);
\draw[->,line width=0.5pt] (O)--(Y);
\draw[->,line width=0.5pt] (O)--(Z);

\fill[blue,opacity=1] (O) circle (2pt);

\filldraw [white,fill=blue!20] (O) -- (1) -- (A) -- (2) --cycle;
\filldraw [white,fill=blue!20] (O) -- (1) -- (B) -- (3) --cycle;
\filldraw [white,fill=blue!20] (O) -- (3) -- (C) -- (2) --cycle;
\filldraw [white,fill=blue!20] (T) -- (A) -- (1) -- (B) --cycle;
\filldraw [white,fill=blue!20] (T) -- (C) -- (2) -- (A) --cycle;
\filldraw [white,fill=blue!20] (T) -- (B) -- (3) -- (C) --cycle;

\draw [blue,line width=1pt,opacity=1] (O) -- (1) -- (A) -- (2) --cycle;
\draw [blue,line width=1pt,opacity=1] (O) -- (1) -- (B) -- (3) --cycle;
\draw [blue,line width=1pt,opacity=1] (O) -- (3) -- (C) -- (2) --cycle;
\draw [blue,line width=1pt,opacity=1] (c) -- (A) -- (1) -- (B) -- (b);
\draw [blue,line width=1pt,opacity=1] (a) -- (C) -- (2) -- (A) -- (c);
\draw [blue,line width=1pt,opacity=1] (b) -- (B) -- (3) -- (C) -- (a); 
\draw [dashed,red,line width=1pt,opacity=1] (c) -- (T);
\draw [dashed,red,line width=1pt,opacity=1] (a) -- (T);
\draw [dashed,red,line width=1pt,opacity=1] (b) -- (T); 

\fill[red,opacity=1] (1) circle (2pt);
\fill[red,opacity=1] (2) circle (2pt);
\fill[red,opacity=1] (3) circle (2pt);
\fill[red,opacity=1] (A) circle (2pt);
\fill[red,opacity=1] (B) circle (2pt);
\fill[red,opacity=1] (C) circle (2pt);
\fill[red,opacity=1] (A) circle (2pt);
\fill[red,opacity=1] (B) circle (2pt);
\fill[red,opacity=1] (C) circle (2pt);

\draw [opacity=1] (4,0,0) node {$\frac{1}{p_1}$};
\draw [opacity=1] (0,4,0) node {$\frac{1}{p_2}$};
\draw [opacity=1] (0,0,4) node {$\frac{1}{p_3}$};
\draw [opacity=1] (3,-0.3,0) node {$1$};
\draw [opacity=1] (-0.3,3,0) node {$1$};
\draw [opacity=1] (0.1,-0.3,3) node {$1$};
\draw [opacity=1] (2,3.4,3) node {$\frac{d-1}{d}$};
\draw [opacity=1] (2.5,2,3) node {$\frac{d-1}{d}$};
\draw [opacity=1] (3,3.4,2) node {$\frac{d-1}{d}$};

\fill[red,opacity=1] (a) circle (2pt);
\fill[red,opacity=1] (b) circle (2pt);
\fill[red,opacity=1] (c) circle (2pt);
\fill[red,opacity=1] (T) circle (2pt);

\node [opacity=1] at (3.5,0,7) {\textup{Fig 5.\,} \textit{Region for $\mathcal M^3_{\mathrm{lac}}$}};
\end{tikzpicture}
\end{minipage}
\hspace{2cm}
\begin{minipage}{.2\textwidth}
\begin{tikzpicture}[scale=0.9][thick,fill opacity=0.5]

\coordinate (O) at (0,0,0);
\coordinate (1) at (3,0,0);
\coordinate (2) at (0,3,0);
\coordinate (3) at (0,0,3);
\coordinate (A) at (3,3,0);
\coordinate (B) at (3,0,3);
\coordinate (C) at (0,3,3);
\coordinate (T) at (3,3,3);
\coordinate (a) at (2,3,3);
\coordinate (b) at (3,2,3);
\coordinate (c) at (3,3,2);
\coordinate (X) at (3.5,0,0);
\coordinate (Y) at (0,3.5,0);
\coordinate (Z) at (0,0,3.5);

\draw[->,line width=0.5pt] (O)--(X);
\draw[->,line width=0.5pt] (O)--(Y);
\draw[->,line width=0.5pt] (O)--(Z);

\draw (4,0,0) node {$\frac{1}{p_1}$};
\draw (0,4,0) node {$\frac{1}{p_2}$};
\draw (0,0,4) node {$\frac{1}{p_3}$};
\draw (3,-0.3,0) node {$1$};
\draw (-0.3,3,0) node {$1$};
\draw (0.1,-0.3,3) node {$1$};

\fill[blue,opacity=1] (O) circle (2pt);

\filldraw [white,fill=blue!20] (O) -- (1) -- (A) -- (2) --cycle;
\filldraw [white,fill=blue!20] (O) -- (1) -- (B) -- (3) --cycle;
\filldraw [white,fill=blue!20] (O) -- (3) -- (C) -- (2) --cycle;
\filldraw [white,fill=blue!20] (T) -- (A) -- (1) -- (B) --cycle;
\filldraw [white,fill=blue!20] (T) -- (C) -- (2) -- (A) --cycle;
\filldraw [white,fill=blue!20] (T) -- (B) -- (3) -- (C) --cycle;

\draw [blue,line width=1pt,opacity=1] (O) -- (1) -- (A) -- (2) --cycle;
\draw [blue,line width=1pt,opacity=1] (O) -- (1) -- (B) -- (3) --cycle;
\draw [blue,line width=1pt,opacity=1] (O) -- (3) -- (C) -- (2) --cycle;
\draw [blue,line width=1pt,opacity=1] (T) -- (A) -- (1) -- (B) --cycle;
\draw [blue,line width=1pt,opacity=1] (T) -- (C) -- (2) -- (A) --cycle;
\draw [blue,line width=1pt,opacity=1] (T) -- (B) -- (3) -- (C) --cycle; 

\fill[blue,opacity=1] (1) circle (2pt);
\fill[blue,opacity=1] (2) circle (2pt);
\fill[blue,opacity=1] (3) circle (2pt);
\fill[blue,opacity=1] (A) circle (2pt);
\fill[blue,opacity=1] (B) circle (2pt);
\fill[blue,opacity=1] (C) circle (2pt);
\fill[blue,opacity=1] (T) circle (2pt);

\node [opacity=1] at (3.5,0,7) {\textup{Fig 6.\,} \textit{Region for $\mathcal A^3$}};
\end{tikzpicture}
\end{minipage}
\end{center}

In  Figures 4--6, we displayed the regions for which the trilinear versions of the aforementioned  operators are bounded from 
$L^{p_1}(\mathbb R^d) \times L^{p_2}(\mathbb R^d) \times L^{p_3}(\mathbb R^d)$ to $L^{r}(\mathbb R^d)$.

\bigskip
\noindent{\bf Acknowledgment} The author would like to thank her advisor Loukas Grafakos for his guidance and advice throughout the writing of this paper. His Curators fund provided support for the author.

\bibliographystyle{amsplain}

\end{document}